\documentclass[12pt,reqno]{amsart}

\usepackage[l2tabu,orthodox]{nag}

\usepackage{hyperref}

\usepackage{enumerate}

\usepackage{cmap}

\usepackage[swedish,english]{babel}
\usepackage[T1]{fontenc}
\usepackage[latin1]{inputenc}

\usepackage{a4wide}

\usepackage[pdftex,dvipsnames]{xcolor}

\usepackage{amsmath,amssymb,amsthm}

\theoremstyle{plain}
\newtheorem{thm}{Theorem}[section]
\newtheorem{lem}[thm]{Lemma}
\newtheorem{prop}[thm]{Proposition}
\newtheorem{cor}[thm]{Corollary}
\theoremstyle{definition}

\newtheorem{exmp}[thm]{Example}
\theoremstyle{remark}
\newtheorem{rem}[thm]{Remark}

\theoremstyle{plain}
\newtheorem*{BurnsThm}{Theorem}

\theoremstyle{plain}
\newtheorem{mainthm}{Theorem}

\theoremstyle{plain}
\newtheorem{mainCorA}{Corollary}

\theoremstyle{plain}
\newtheorem{mainCorB}{Corollary}[mainthm]

\newcommand{\Z}{\mathbb{Z}}
\newcommand{\Q}{\mathbb{Q}}

\newcommand{\C}{\mathbb{C}}
\renewcommand{\emptyset}{\varnothing}

\DeclareMathOperator{\Endr}{Endr}
\DeclareMathOperator{\Supp}{Supp}

\title[Central idempotents in group-graded rings]{Central idempotents in group-graded rings}
\date{\today}

\begin{document}

\author{Johan \"{O}inert}
\address{
Department of Mathematics and Natural Sciences,
Blekinge Institute of Technology,
SE-37179 Karlskrona, Sweden
and
Department of Engineering, University of Sk\"{o}vde, SE-54128 Sk\"{o}vde, Sweden
}
\email{johan.oinert@bth.se}

\subjclass[2020]{16W50, 16U40, 16U70, 16S34, 16S35, 16S36, 16S88}
\keywords{graded ring, group-graded ring, central idempotent, support group, group ring, crossed product, strongly graded ring, semigroup-graded ring, Leavitt path ring, partial skew group ring, Cuntz-Pimsner ring}

\begin{abstract}
Let $G$ be a group and let $R$ be a $G$-graded ring. 
We show that every nonzero central idempotent in $R$ has finite support group in two broad settings:
when $G$ is abelian, and when $G$ is arbitrary but the grading satisfies a certain one-sided non-annihilation condition on nonzero homogeneous elements.
In particular, 
under the respective hypotheses,
if $G$ is torsion-free, then every central idempotent lies in the principal component of the grading.
Our results generalize earlier results by H. Bass, R. G. Burns, and A. A. Bovdi--S. V. Mihovski, from group rings and crossed products, to non-commutative, possibly non-unital, group-graded rings.
We demonstrate the utility of our results by applying them to semigroup-graded rings, Leavitt path rings, fractional skew monoid rings, partial skew group rings, and algebraic Cuntz-Pimsner rings.
\end{abstract}

\maketitle

\section{Introduction}

Let $G$ be a group with identity element $e$.
Recall that a (possibly non-unital) ring $R$ is said to be \emph{$G$-graded} if there is a collection of additive subgroups $\{R_g\}_{g\in G}$ of $R$
such that $R=\oplus_{g\in G} R_g$, and $R_g R_h \subseteq R_{gh}$ for all $g,h\in G$.
If $R_g R_h = R_{gh}$ holds for all $g,h\in G$, then $R$ is said to be \emph{strongly $G$-graded}.
The \emph{support} of a nonzero element $r=\sum_{g\in G} r_g$ is the finite set $\Supp(r):=\{g\in G \mid r_g \neq 0\}$.
The subgroup of $G$ that is generated by $\Supp(r)$ is said to be \emph{the support group of $r$}.
Although finitely generated, the support group of an element need not be finite.

A group ring is a prototypical example of a strongly group-graded ring.
In 1970, Burns~\cite{Burns} established the following result regarding the support group of central idempotents in group rings,
building on earlier work by Rudin and Schneider \cite{RudinSchneider}.

\begin{BurnsThm}[Burns]
Let $A$ be a unital ring and let $G$ be a group.
If $f$ is a nonzero central idempotent in the group ring $A[G]$,
then $f$ has finite support group.
\end{BurnsThm}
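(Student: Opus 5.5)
We plan to reduce to a group-theoretic statement about the support. Let $f=\sum_{g\in G} f_g$ be a nonzero central idempotent in $A[G]$ and put $S=\Supp(f)$. Let $H$ be the support group of $f$. Since $H$ is finitely generated by $S$, the goal is to show $H$ is finite.

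\textbf{Step 1: $H$ is normalised-conjugation-stable and the support is a union of finite conjugacy classes.} Centrality gives $gfg^{-1}=f$ for every $g\in G$ (viewing $g=1_A g\in A[G]$). Comparing coefficients yields $f_{ghg^{-1}}=f_h$. Hence $S$ is closed under conjugation. Since $S$ is finite, every element of $S$ has finitely many conjugates, so $S\subseteq \Delta(G)$, the FC-subgroup. Thus $H$ is a finitely generated FC-group. By a classical theorem (B.H. Neumann), its torsion elements form a finite normal subgroup $T$, and $H/T$ is torsion-free abelian.

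\textbf{Step 2: pass to the torsion-free abelian quotient.} Consider the ring homomorphism $\pi\colon A[H]\to A[H/T]$ induced by $H\to H/T$. Note that $f\in A[H]$, and $f$ is central in $A[H]$. Then $\pi(f)$ is an idempotent in the group ring of the finitely generated torsion-free abelian group $H/T\cong\Z^n$ over $A$; it is central there as well, since $\pi$ is surjective. For central idempotents in $A[\Z^n]$ with $A$ arbitrary unital, we show they lie in $A$. Induct on $n$, writing $A[\Z^n]=B[x,x^{-1}]$ with $B=A[\Z^{n-1}]$. An idempotent $p=\sum_{i=a}^b p_i x^i$ with top degree $b>0$ satisfies $p^2=p$. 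Using centrality, we can reduce to the commutative subring $Z(A)[\ldots]$? Centrality of $p$ in $B[x,x^{-1}]$ forces its coefficients to be central in $B$. Thus $p$ lies in $Z(B)[x,x^{-1}]$, which is a commutative Laurent polynomial ring. There, an idempotent has only degree-zero part: reduce modulo each prime ideal $\mathfrak p$ of $Z(B)$, where the domain $(Z(B)/\mathfrak p)[x^{\pm1}]$ has only trivial idempotents. Hence the non-constant coefficients lie in every prime, so they are nilpotent. An idempotent whose nonzero-degree part is nilpotent equals its degree-zero part: writing $p=p_0+\nu$ with $\nu$ nilpotent, one checks the idempotent lifting is unique, so $\nu=0$. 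Therefore $\pi(f)$ has support $\{e\}$.

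\textbf{Step 3: conclude finiteness.} This is the main obstacle, since Step 2 only says the images cancel, not that $S\subseteq T$. We use $\varepsilon$-type arguments instead. We would rather avoid this by repeating Step 2 with the grading: $A[H]$ is $H/T$-graded with components $A[T]\,h$. An idempotent of such a $\Z^n$-graded ring whose components are central and mutually commuting (they do commute, as $f$ is central) has all nonzero-degree components nilpotent by the same prime-ideal argument applied in the commutative subring they generate. Again by uniqueness, $f$ lies in the degree-$0$ component $A[T]$. Thus $S\subseteq T$, so $H\subseteq T$ is finite. The delicate points are the commutativity of the subring generated by the homogeneous components of $f$, which should follow from centrality of $f$ and $\Z^n$-homogeneity arguments, and the uniqueness-of-idempotent step in that commutative ring.
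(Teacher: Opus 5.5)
The paper does not prove this statement. It quotes Burns and uses the result as input to Theorem~\ref{thm:Main1}, hence also to Corollary~\ref{cor:AbelianTorFree} and Theorem~\ref{thm:Main2}, so none of those can be invoked here. Your Steps 1 and 2 are sound: the conjugation invariance of $S$, Neumann's theorem on finitely generated FC-groups, and the reduction of a central idempotent of $A[\Z^n]$ to $Z(A)[\Z^n]$ all work. One small correction in Step 2: for uniqueness, compare $p$ with its augmentation $\sum_i p_i$ rather than with $p_0$. The augmentation is an idempotent of the coefficient ring differing from $p$ by a nilpotent, whereas $p_0$ is not known to be idempotent.

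\textbf{The gap is in Step 3}, the only step that actually controls $\Supp(f)$. The prime-ideal argument of Step 2 works because, for a prime $\mathfrak p$ of $Z(B)$, the quotient $(Z(B)/\mathfrak p)[x^{\pm1}]$ is a domain. So $p\equiv 0$ or $1$ modulo $\mathfrak p$ forces each coefficient of nonzero degree into $\mathfrak p$. The commutative ring $C$ generated by the components $f_\gamma$ ($\gamma\in H/T$) is graded, but it is not a Laurent polynomial ring over a coefficient ring. Its prime ideals need not be homogeneous; think of $(x-1)$ in $k[x^{\pm1}]$. Knowing that $f\in\mathfrak p$ or $1-f\in\mathfrak p$ for a prime $\mathfrak p$ of $C$ says nothing about the individual $f_\gamma$. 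So ``the same prime-ideal argument'' does not give nilpotency of the nonzero-degree components without further input, such as homogeneity of minimal primes.

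The missing idea is exactly the graded embedding of Lemma~\ref{lem:EmbeddingTrick}. The point you flagged as delicate is actually easy. Since $H/T$ is abelian, the argument of Lemma~\ref{lem:CentralComponentWise} shows that each $f_\gamma$ is central in $A[H]$. Now define $\phi\colon A[H]\to A[H][t_1^{\pm1},\ldots,t_n^{\pm1}]$ by $\sum_\gamma r_\gamma\mapsto\sum_\gamma r_\gamma t^\gamma$, using the $H/T\cong\Z^n$-grading with components $A[hT]$.
\begin{itemize}
\item $\phi$ is an injective unital ring homomorphism.
\item $\phi(f)$ is a central idempotent, since its coefficients and the $t_i$ are central.
\item Your Step 2, applied with coefficient ring $A[H]$ in place of $A$, gives $f_\gamma=0$ for $\gamma\neq 0$. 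Thus $f\in A[T]$, and so $H\subseteq T$ is finite.
\end{itemize}
With this repair the projection $\pi$ is unnecessary. You then have a self-contained proof that parallels the paper's proof of Theorem~\ref{thm:Main2}: FC support group, torsion-free abelian quotient, support in the kernel. Neumann's theorem replaces Lemma~\ref{lem:FCisomorphic}, and your Laurent-polynomial computation replaces the appeal to Burns's theorem, so there is no circularity.
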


In fact, Burns proved the theorem for non-unital group rings, but we only cite the unital version here
since it will be enough for our purposes and also consistent with how group rings are usually defined nowadays.
Independently, 
Bovdi and Mihovski \cite{BovdiMihovski,BovdiMihovski1973} 
showed that the above conclusion 
holds for any (unital) $G$-crossed product.
Later on, Bass \cite[Lem.~6.7]{Bass} provided 
a short and elegant proof that 
every idempotent in a commutative group ring $A[G]$ is contained 
in the subring $A[T]$, where $T$ denotes the torsion subgroup of $G$.

In this article, we use Burns's theorem as a lever, allowing us
to prove similar statements in the more general context of group-graded rings.
In Section~\ref{Sec:First}, we establish 
our first main result which generalizes
Bass's result
from the setting of commutative group rings to the setting of group-graded rings that are 
allowed to be both
non-commutative and non-unital.

\begin{mainthm}\label{thm:Main1}
Let $G$ be an abelian group and let $R$ be a $G$-graded ring.
If $f$ is a nonzero central idempotent in $R$,
then $f$ has finite support group.
In particular, the support of a nonzero central idempotent in $R$
is contained in the torsion subgroup of $G$.
\end{mainthm}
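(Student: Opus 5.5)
The plan is to use Burns's theorem as a lever, by embedding $R$ into a unital group ring over $G$ in a way that preserves supports and sends central idempotents to central idempotents. Since $R$ may be non-unital, I first pass to the Dorroh extension $A := \Z \times R$. This is a unital ring containing $R$ as a two-sided ideal via $r \mapsto (0,r)$, with multiplication $(n,r)(m,s)=(nm, ns+mr+rs)$.

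I then define the map
\begin{equation*}
\varphi \colon R \to A[G], \qquad \varphi(r) := \sum_{g\in G} r_g\,\delta_g ,
\end{equation*}
where $r=\sum_{g\in G} r_g$ is the homogeneous decomposition. The map $\varphi$ is clearly additive and injective, and $\Supp(\varphi(r))=\Supp(r)$. It is multiplicative because $R_gR_h\subseteq R_{gh}$. Indeed, $\varphi(r)\varphi(s)=\sum_{g,h} r_g s_h\,\delta_{gh}$, and the coefficient of $\delta_k$ is $\sum_{gh=k} r_g s_h = (rs)_k$. This step works for arbitrary $G$. Consequently, if $f$ is a nonzero idempotent of $R$, then $\varphi(f)$ is a nonzero idempotent of $A[G]$ with the same support as $f$.

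The key step is to show that $\varphi(f)$ is central in $A[G]$, and this is where commutativity of $G$ is used. First I show that each homogeneous component $f_g$ is central in $R$. Take a homogeneous $r\in R_k$. The equation $fr=rf$ reads $\sum_g f_g r = \sum_g r f_g$. Here $f_g r\in R_{gk}$ and $r f_g\in R_{kg}=R_{gk}$, since $G$ is abelian. Comparing components of degree $gk$, and using that $g\mapsto gk$ is injective, gives $f_g r = r f_g$ for every $g$. By additivity, each $f_g$ then commutes with all of $R$, and hence with all of $A$, since the $\Z$-part is central. Centrality of $\varphi(f)$ in $A[G]$ now follows: it commutes with $a\delta_e$ for all $a\in A$ by the previous observation, and it commutes with each $\delta_h$ because $\delta_g\delta_h=\delta_{gh}=\delta_{hg}=\delta_h\delta_g$. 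These elements generate $A[G]$ additively-multiplicatively, so $\varphi(f)$ is central.

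Burns's theorem now applies to the nonzero central idempotent $\varphi(f)$ in the unital group ring $A[G]$. It follows that the subgroup generated by $\Supp(\varphi(f))=\Supp(f)$ is finite. A finite subgroup consists of torsion elements, so $\Supp(f)$ lies in the torsion subgroup of $G$, which gives the final claim. The main obstacle I anticipate is exactly the centrality argument: verifying that the homogeneous components of a central element are central. This fails for nonabelian $G$, since $gk\neq kg$ in general, and that failure is presumably why the general case requires the extra non-annihilation hypothesis mentioned in the abstract.
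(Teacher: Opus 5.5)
Your proof is correct and follows essentially the same route as the paper. You pass to the Dorroh unitization, embed via $r\mapsto\sum_g r_g u_g$ into a unital group ring preserving supports, use commutativity of $G$ to show that homogeneous components of a central element are central (so the image is central), and then apply Burns's theorem. The only cosmetic difference is that you map $R$ directly into $A[G]$ for the ungraded Dorroh extension $A$. The paper instead first grades the unitization $D$ and embeds $D$ into $D[G]$ by an identity-preserving map; your version shows that step is not actually needed.
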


As a direct consequence, we obtain a partial generalization of 
a result of Kirby 
\cite[Thm.~1]{Kirby} that was proved in the setting of strongly group-graded commutative rings.

\begin{mainCorA}\label{cor:AbelianTorFree}
Let $G$ be a torsion-free abelian group and let $R$ be a 
$G$-graded ring.
Then every central idempotent in $R$ is contained in $R_e$.
\end{mainCorA}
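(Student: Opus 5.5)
This follows directly from Theorem~\ref{thm:Main1}, so the plan is to reduce to it and then use torsion-freeness. First I dispose of the trivial case: the zero element is a central idempotent, and it lies in $R_e$ because $R_e$ is an additive subgroup. So I may assume that $f$ is a nonzero central idempotent in $R$.

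Since $G$ is abelian, Theorem~\ref{thm:Main1} applies to $R$. It tells me that the support group $H$ of $f$, the subgroup of $G$ generated by $\Supp(f)$, is finite. Every element of a finite group has finite order. Because $G$ is torsion-free, the only element of finite order is $e$, so $H=\{e\}$. Equivalently, I can use the second sentence of Theorem~\ref{thm:Main1}: $\Supp(f)$ is contained in the torsion subgroup of $G$, and that subgroup is trivial here.

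In either form, $\Supp(f)\subseteq\{e\}$. Writing $f=\sum_{g\in G} f_g$, this means $f_g=0$ for all $g\neq e$, hence $f=f_e\in R_e$.

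I expect no real obstacle, since all the substantive work is contained in Theorem~\ref{thm:Main1}. The only points to check are the separate handling of $f=0$ and the elementary fact that a finite subgroup of a torsion-free group is trivial.
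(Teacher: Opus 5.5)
Your proof is correct and follows the paper's route: the paper presents this corollary as a direct consequence of Theorem~\ref{thm:Main1}. The support of a nonzero central idempotent lies in the torsion subgroup of $G$, which is trivial when $G$ is torsion-free, and the case $f=0$ is immediate.
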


Recall that an element of a $G$-graded ring $R$ is said to be \emph{homogeneous}
if it is contained in the set $\cup_{g\in G} R_g$.
Also recall that we write $\Supp(R):=\{g\in G \mid R_g \neq \{0\}\}$.
In Section~\ref{Sec:Second}, we establish our second main result
which is the following generalization of Burns's theorem.

\begin{mainthm}\label{thm:Main2}
Let $G$ be a group and let $R$ be a $G$-graded ring.  
Suppose that 
at least one of the following two assertions holds:
\begin{enumerate}[{\rm (i)}]
	\item $R_gr \neq \{0\}$
	for all nonzero homogeneous $r\in R$ and all $g\in \Supp(R)$.
	\item $rR_g \neq \{0\}$
	for all nonzero homogeneous $r\in R$ and all $g\in \Supp(R)$.
\end{enumerate}
If $f$ is a nonzero central idempotent in $R$,
then $f$ has finite support group.
\end{mainthm}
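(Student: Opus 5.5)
The plan is to reduce to Burns's theorem by realizing $f$, or a faithful image of $f$, as a nonzero central idempotent in an honest group ring $A[H]$ over a unital ring $A$, where $H$ is the support group of $f$. By symmetry (pass to the opposite ring, which is $G^{\mathrm{op}}$-graded and turns (ii) into (i)), we may assume (i) holds.

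\textbf{Step 1: the centrality relations.} Write $f=\sum_{g} f_g$. Comparing homogeneous components of $f r_h = r_h f$ for $r_h\in R_h$ gives
\begin{equation*}
f_{g}\, r_h = r_h\, f_{h^{-1}gh} \qquad \text{for all } g,h\in G,\ r_h\in R_h .
\end{equation*}
Similarly, $f^2=f$ gives $\sum_{ab=c} f_af_b=f_c$. Hypothesis (i) will be used to show that $\Supp(f)$ is stable under conjugation by elements of $\Supp(R)$. Indeed, if $f_{h^{-1}gh}\neq 0$ and $h\in\Supp(R)$, then (i) gives some $r_h$ with $r_hf_{h^{-1}gh}\neq 0$, so $f_g\neq 0$. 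We may also replace $G$ by the subgroup generated by $\Supp(R)$. Consequently $\Supp(f)$ is a finite union of $G$-conjugacy-closed pieces, and $H$ is normalized by $G$.

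\textbf{Step 2: the group ring.} Let $A=\operatorname{End}_{\Z}(R)$, viewed via right multiplication so that composition is compatible, with $\rho(x)(y)=yx$. Consider
\begin{equation*}
E=\sum_{g}\rho(f_g)\,\delta_g\in A[G].
\end{equation*}
Then $E^2=E$ follows from $\sum_{ab=c}f_af_b=f_c$, and $E\neq 0$ since $\rho$ is injective on homogeneous elements by (i). The point is now to find a unital subring $A'\subseteq A$ containing all $\rho(f_g)$ and a group $K\supseteq H$ such that $E$ is central in $A'[K]$. Centrality against $A'$ needs the $\rho(f_g)$ to commute with $A'$, while centrality against $\delta_k$ needs $\rho(f_g)=\rho(f_{k^{-1}gk})$. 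The latter is false in general, so the naive choice fails. Instead I would twist: replace $\delta_g$ by a formal symbol carrying a homogeneous shift operator, and use the relations of Step 1 to see that conjugation by $\delta_h$ (realized by right multiplication by homogeneous $r_h$) permutes the coefficients of $E$ exactly as required.

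\textbf{Main obstacle.} This realization is the delicate step. What I would try first is to restrict to the graded endomorphism ring of $R$ viewed as a graded right module, decomposed by degree shifts. There, $R\cong\bigoplus_g R_g$, and the family of degree-$k$ shift maps plays the role of $\delta_k$. Left multiplication by $f$ is then a graded-module endomorphism, because $f$ is central, so $\lambda(f)$ is central in the endomorphism ring. Expanding $\lambda(f)$ in components of degree $g$ then writes it as $\sum \lambda(f_g)$ in an $H$-graded ring whose homogeneous parts contain unit-like shift elements. If that ring is genuinely a crossed product over $H$, the theorem of Bovdi--Mihovski (the crossed-product version of Burns's theorem) finishes the argument. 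Condition (i) is exactly what guarantees that $f\mapsto\lambda(f)$ preserves supports, i.e.\ that $\lambda(f_g)\neq0$ whenever $f_g\neq 0$. Hence finiteness of the support group of $\lambda(f)$ transfers back to $f$.
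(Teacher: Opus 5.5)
\textbf{There is a genuine gap in Step~2.} Step~1 is correct, and it is the key lemma of the paper. The relation $f_g r_h = r_h f_{h^{-1}gh}$ together with (i) gives $h\,\Supp(f)\,h^{-1}=\Supp(f)$ for every $h\in\Supp(R)$. Since $\Supp(f)\subseteq\Supp(R)$, this already shows that the support group $H$ is an FC-group (Lemma~\ref{lem:centralFCsupportGeneralized}), although you do not draw that conclusion.

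Step~2 is never carried out. It ends with ``if that ring is genuinely a crossed product over $H$'', and the ring you propose is not one:
\begin{enumerate}[(a)]
\item For unital $R$, every graded endomorphism $\alpha$ of $R_R$ satisfies $\alpha(r)=\alpha(1)r$. So $\alpha\mapsto\alpha(1)$ identifies the graded endomorphism ring with $R$ itself as a graded ring. You are back at $R_H$, and hypothesis (i) supplies no invertible homogeneous elements of nontrivial degree.
\item As maps of the underlying module, the degree-shift maps are all the identity of $R$. They only become distinct units of nontrivial degree in the endomorphism ring of $\bigoplus_{x}R(x)$.
\item In that larger ring, multiplication by $f_g$ is no longer homogeneous: on the summand $R(x)$ it has degree $x^{-1}gx$. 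So $\lambda(f)=\sum_g\lambda(f_g)$ is not the homogeneous decomposition unless $g$ is central in $H$.
\end{enumerate}
Step~2 therefore assumes exactly the point that needs proof.

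\textbf{The paper's route.} The paper avoids crossed products entirely. By Burns's structure lemma (Lemma~\ref{lem:FCisomorphic}), the FC-group $H$ is a subdirect product of a torsion-free abelian group $A$ and a locally normal group $B$. Let $N$ be the kernel of $H\to A$, and regrade $R_H$ by $H/N\cong A$. For an abelian grading group, the homogeneous components of a central element are central (Lemma~\ref{lem:CentralComponentWise}). Hence the naive embedding $r_C\mapsto r_Cu_C$ into a group ring over the Dorroh unitization keeps $f$ central. Burns's theorem, with $A$ torsion-free, then gives $f\in R_N$ (Corollary~\ref{cor:AbelianTorFree}). Since $N$ embeds in $B$, it is locally normal, so $\Supp(f)$ lies in a finite subgroup.

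\textbf{Repairing your own strategy.} Your approach can also be made to work, but with a different ring. Over the Dorroh unitization $D$ of $R_H$, let $S_g$ be the set of column-finite $H\times H$ matrices $M$ with $M_{x,y}\in D_{xgy^{-1}}$. Then:
\begin{enumerate}[(a)]
\item $S=\bigoplus_{g\in H}S_g$ is a unital $H$-graded ring.
\item The permutation matrices $P_h$, with $(P_h)_{x,xh}=1$, are units of degree $h$, so $S$ is a skew group ring over $H$.
\item The scalar matrix $fI$ is a central idempotent whose degree-$g$ component is $\mathrm{diag}(f_{xgx^{-1}})_{x\in H}$.
\end{enumerate}
Your Step~1 invariance is exactly what makes $fI$ lie in $S$ with support $\Supp(f)$, after which Bovdi--Mihovski applies. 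As written, though, your proposal contains neither of these arguments and does not prove the theorem.
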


We point out that 
Theorems~\ref{thm:Main1} and \ref{thm:Main2} 
cannot be generalized to hold for arbitrary idempotents (see Example~\ref{ex:NonExample1}).
We also demonstrate that the conclusions of Theorems~\ref{thm:Main1} and \ref{thm:Main2} may fail if we weaken the assumptions (see Example~\ref{ex:Failure1}).

Theorem~\ref{thm:Main2} yields 
the following three corollaries that 
we prove
at the end of Section~\ref{Sec:Second}.
Recall that a ring $R$ is said to be \emph{$s$-unital} if $r \in rR \cap Rr$ for every $r\in R$.
The following corollary applies in particular to any unital strongly $G$-graded ring.

\begin{mainCorB}\label{cor:Strongly}
Let $G$ be a group and let $R$ be an $s$-unital strongly $G$-graded ring.
If $f$ is a nonzero central idempotent in $R$,
then $f$ has finite support group.
In particular, if $G$ is torsion-free, then every central idempotent in $R$ is contained in $R_e$.
\end{mainCorB}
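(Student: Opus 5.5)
We derive Corollary~\ref{cor:Strongly} from Theorem~\ref{thm:Main2}. It suffices to verify that an $s$-unital strongly $G$-graded ring satisfies assertion~(i) (or, symmetrically, (ii)). The torsion-free part then follows at once: if the support group $H$ of $f$ is finite and $G$ is torsion-free, then $H=\{e\}$, so $\Supp(f)\subseteq\{e\}$ and $f\in R_e$. The zero idempotent lies in $R_e$ trivially.

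\textbf{Step 1.} We may assume $R\neq\{0\}$. First observe that strong grading forces $\Supp(R)=G$ as soon as $R\neq\{0\}$. Indeed, if $R_h\neq\{0\}$ for some $h$, then $R_h=R_{g}R_{g^{-1}h}$ for every $g$, so $R_g\neq\{0\}$ for all $g$. In particular $R_e\neq\{0\}$.

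\textbf{Step 2.} This is the key step and the main obstacle: show that $R_g r\neq\{0\}$ for every nonzero homogeneous $r\in R_h$ and every $g\in G$. Suppose instead that $R_g r=\{0\}$. Then
\[
R_e r = R_{g^{-1}}R_g r=\{0\}.
\]
We must now contradict $s$-unitality, which gives $r\in Rr$, say $r=\sum_k s_k r$ with $s_k\in R_{k}$. Comparing degree-$h$ components, and using that $s_k r\in R_{kh}$, yields $r=s_e r$ with $s_e\in R_e$. Hence $r\in R_e r=\{0\}$, a contradiction.

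To justify this homogeneous reduction cleanly, write $r=ur$ with $u=\sum_k u_k$, and project onto $R_h$: since $u_k r\in R_{kh}$, we get $r=u_e r$. So (i) holds, and the corollary follows from Theorem~\ref{thm:Main2}.

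The only delicate point is this passage from $s$-unitality of the whole ring to a local unit in $R_e$ for homogeneous elements, which the projection argument handles. For a unital strongly graded ring, $1\in R_e$ makes Step~2 immediate.
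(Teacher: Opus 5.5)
Your proposal is correct and follows the paper's proof: you verify assertion (i) of Theorem~\ref{thm:Main2} via $r\in R_e r=R_{g^{-1}}R_g r=\{0\}$ and then read off the torsion-free case. The only difference is that you need a local unit in $R_e$ only for homogeneous $r$, which your projection of $r=ur$ onto degree $h$ supplies directly; the paper's Lemma~\ref{lem:SunitsTrick} handles arbitrary $r$ and so uses Tominaga's common local unit (your Step~1 is harmless but unnecessary, since you prove (i) for all $g\in G$).
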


Note that the preceding corollary and the next corollary both apply to any $G$-crossed product (see e.g. \cite{NVO} and \cite{NauwVO}).

\begin{mainCorB}\label{cor:Crystalline}
Let $G$ be a group and let $A*G$ be a crystalline $G$-graded ring.
If $f$ is a nonzero central idempotent in $A*G$,
then $f$ has finite support group.
In particular, if $G$ is torsion-free, then every central idempotent in $A*G$ is contained in $A$.
\end{mainCorB}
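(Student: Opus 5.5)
The plan is to deduce the corollary from Theorem~\ref{thm:Main2} by checking that a crystalline $G$-graded ring satisfies assertion (ii), or dually (i). Recall the setup: $R=A*G=\oplus_{g\in G} Au_g$ is a unital ring with $A\subseteq R$ sharing the identity and $u_e=1$. Each component $R_g=Au_g=u_gA$ is free of rank one as both a left and a right $A$-module with basis $u_g$. Since $u_g\neq 0$ for every $g$, we have $\Supp(R)=G$. I will use the standard structure maps from \cite{NVO,NauwVO}. There is a map $\sigma_g\colon A\to A$ with $u_g a=\sigma_g(a)u_g$, and there are elements $\alpha(g,h)\in A$ with $u_gu_h=\alpha(g,h)u_{gh}$.

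To verify (ii), take a nonzero homogeneous element $r=au_h$ with $a\in A\setminus\{0\}$, and fix $g\in G$. Then $r u_g = a\,\alpha(h,g)\,u_{gh'}$ where $h'$ is such that $hg$ is the degree, i.e.
\begin{equation*}
r u_g = a\,u_hu_g = a\,\alpha(h,g)\,u_{hg}.
\end{equation*}
Because $Au_{hg}$ is free as a left $A$-module on $u_{hg}$, this vanishes if and only if $a\,\alpha(h,g)=0$. So it suffices to know that $\alpha(h,g)$ is not a right zero divisor in $A$, i.e.\ $x\alpha(h,g)=0$ implies $x=0$. Then $rR_g\ni ru_g\neq 0$, which gives (ii). Theorem~\ref{thm:Main2} then shows that every nonzero central idempotent $f$ has finite support group.

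The main obstacle is the regularity of the cocycle values $\alpha(h,g)$. For this I would first invoke the known result of Nauwelaerts and Van Oystaeyen \cite{NauwVO} that in a crystalline graded ring the $\alpha(g,h)$ are regular elements of $A$ (they lie in a left and right Ore set of non-zero-divisors). If I want a self-contained argument instead, I would argue as follows. Suppose $x\alpha(h,g)=0$. Then $xu_hu_g=0$, so $xu_h$ is an element of $R_h$ annihilated on the right by $u_g$, and hence by $u_gA=R_g$. Right multiplication by $R_g$ sends $R_h=u_hA$ into $R_{hg}=u_{hg}A$. Comparing with the right-module bases via $u_hu_g=u_{hg}\,\sigma_{hg}^{-1}$-twisted coefficients, and using freeness on both sides, should force $x=0$. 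If one side fails, I would switch to verifying (i) by computing $u_g\,au_h=\sigma_g(a)\alpha(g,h)u_{gh}$. This needs $\sigma_g$ injective, which follows from $u_gA=Au_g$ being free on both sides, together with the same regularity fact.

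For the final claim, suppose $G$ is torsion-free. The support group of a nonzero central idempotent $f$ is then a finite subgroup of a torsion-free group, hence trivial. Therefore $\Supp(f)\subseteq\{e\}$ and $f\in R_e=Au_e=A$. The zero idempotent trivially lies in $A$.
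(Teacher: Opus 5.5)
Your proposal is essentially the paper's proof. The paper also checks the non-annihilation hypothesis of Theorem~\ref{thm:Main2} by citing \cite[Cor.~2.7]{NauwVO}, namely that each $u_g$ is neither a left nor a right zero-divisor; that is exactly what your computation $au_hu_g=a\,\alpha(h,g)u_{hg}$ reduces to. It then finishes the torsion-free case as you do.

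One caution: your optional self-contained route cannot work, because two-sided freeness of the components alone does not force $\alpha(h,g)$ to be regular. For example, $R=\Q[u]/(u^2)$ graded by $\Z/2\Z$, with $A=\Q$ and $u_g=u$, has $Au_g=u_gA$ free of rank one on both sides, yet $\alpha(g,g)=0$. So the regularity really has to be imported from \cite{NauwVO}.
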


A $G$-graded ring $R$ is said to be \emph{right non-degenerately $G$-graded} (resp. \emph{left non-degenerately $G$-graded}) if,
for each $g \in G$ and each nonzero $r_g \in R_g$, we have $r_gR_{g^{-1}} \neq \{0\}$ (resp. $R_{g^{-1}} r_g \neq \{0\}$).
Notably, the next corollary generalizes \cite[Thm.~6.2(iii)]{Oinert}.

\begin{mainCorB}\label{cor:NonDeg}
Let $G$ be a group and let $R$ be a $G$-graded ring. 
Suppose that $R_e$ is a prime ring and that $R$ is right (or left) non-degenerately $G$-graded.
If $f$ is a nonzero central idempotent in $R$,
then $f$ has finite support group.
In particular, if $G$ is torsion-free, then every central idempotent in $R$ is contained in $R_e$.
\end{mainCorB}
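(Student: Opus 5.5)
The plan is to deduce Corollary~\ref{cor:NonDeg} from Theorem~\ref{thm:Main2}. By symmetry, assume $R$ is right non-degenerately $G$-graded; the left case is analogous with the roles of left and right swapped. The aim is to verify assertion (ii) of Theorem~\ref{thm:Main2}: $rR_g \neq \{0\}$ for every nonzero homogeneous $r \in R$ and every $g \in \Supp(R)$.

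So fix a nonzero $r \in R_h$ and some $g \in \Supp(R)$. Right non-degeneracy applied to $r$ gives $rR_{h^{-1}} \neq \{0\}$, and this is a subset of $R_e$. It remains to pass from $R_e$ to $R_g$, which is where primeness of $R_e$ enters.

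Since $g \in \Supp(R)$, choose a nonzero $s \in R_g$. By right non-degeneracy again, $sR_{g^{-1}}$ is a nonzero subset of $R_e$. Now $rR_{h^{-1}}$ is a nonzero right ideal of $R_e$ and $R_{h^{-1}}$ is a right $R_e$-module, and similarly $sR_{g^{-1}}$ is a nonzero right ideal of $R_e$. In a prime ring, the product of two nonzero one-sided ideals is nonzero, because $aR_eb=\{0\}$ forces $a=0$ or $b=0$. Hence there are $a \in rR_{h^{-1}}$ and $b \in sR_{g^{-1}}$, both nonzero, and $c\in R_e$ with $acb \neq 0$.

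Write $a$ as a sum of terms $rx$ with $x \in R_{h^{-1}}$, and $b$ as a sum of terms $sy$ with $y\in R_{g^{-1}}$. Then
\[
acb \in r\,R_{h^{-1}}R_e\,s\,R_{g^{-1}} \subseteq r\,(R_{h^{-1}}R_eR_g)\,R_{g^{-1}} \subseteq r\,R_{h^{-1}g}\,R_{g^{-1}}.
\]
This shows only that $rR_{h^{-1}g}\neq\{0\}$, not $rR_g\neq\{0\}$, so the degree is wrong. The factorization has to be rearranged so that $r$ is multiplied by an element of degree $g$.

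Use instead the nonzero right ideal $rR_{h^{-1}}$ together with the nonzero left ideal $R_{g}R_{g^{-1}}$ of $R_e$. To see that the latter is nonzero: $s\neq 0$ in $R_g$ gives $sR_{g^{-1}}\neq\{0\}$, and this set lies inside $R_gR_{g^{-1}}$. Primeness of $R_e$ gives $rR_{h^{-1}}\cdot R_e\cdot R_gR_{g^{-1}}\neq\{0\}$. But this set is contained in $rR_{h^{-1}}R_g\,R_{g^{-1}}$, which again has the wrong degree.

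The correct choice is $rR_{h^{-1}}R_{h}\cdot R_e \cdot R_g R_{g^{-1}}$. First, $rR_{h^{-1}}R_h$ is nonzero: if $rR_{h^{-1}}R_h=0$, then for any nonzero $t\in rR_{h^{-1}}$ the elements $ty$ with $y\in R_{h^{-1}}$ would satisfy $tR_{h^{-1}}\subseteq rR_{h^{-1}}R_hR_{h^{-1}}\cdot$ — this step needs care. A simpler route is to use primeness directly on $r$. Since $rR_{h^{-1}}\ne 0$, pick $x$ with $rx\ne0$ in $R_e$. Primeness gives $c\in R_e$ and $y\in R_{g^{-1}}$ with $rx\,c\,sy\neq0$. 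Then
\[
rx\,c\,s\,y \in r\,(R_{h^{-1}}R_eR_g)\,R_{g^{-1}} \subseteq r\,R_{h^{-1}g}\,R_{g^{-1}},
\]
so again only $rR_{h^{-1}g}\neq\{0\}$ is obtained.

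Since $g$ was an arbitrary element of $\Supp(R)$, the argument should be run with $s$ chosen in $R_{hg}$ rather than in $R_g$. This requires $hg\in\Supp(R)$. Granting that, $rx\,c\,s \in rR_{h^{-1}}R_eR_{hg}\subseteq rR_g$ is nonzero, which is exactly (ii). The main obstacle is therefore to show $hg\in\Supp(R)$ whenever $h,g\in\Supp(R)$. I would try to prove this by showing that $R_hR_g\neq\{0\}$, using $R_h R_{h^{-1}}$, $R_{g}R_{g^{-1}}$ and primeness of $R_e$: the product $R_{h^{-1}}R_h\,R_e\,R_gR_{g^{-1}}\neq 0$ should yield $R_hR_g\neq0$ after inserting suitable factors, or else left non-degeneracy must be invoked for this step.

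With (ii) established, Theorem~\ref{thm:Main2} gives that $f$ has finite support group. If $G$ is torsion-free, its only finite subgroup is $\{e\}$, so $\Supp(f)\subseteq\{e\}$, that is, $f\in R_e$.
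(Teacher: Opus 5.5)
The reduction to Theorem~\ref{thm:Main2} is the right idea, but the argument is not complete. Everything hinges on the claim that $hg\in\Supp(R)$ whenever $h,g\in\Supp(R)$. You state it only as the ``main obstacle'', and your fallback of invoking left non-degeneracy is not available, because only one-sided non-degeneracy is assumed.

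The claim does follow from your own sketch, but you have to supply the missing verification:
\begin{enumerate}[(a)]
\item Since $rR_{h^{-1}}\neq\{0\}$, we have $R_{h^{-1}}\neq\{0\}$. Applying right non-degeneracy to a nonzero $t\in R_{h^{-1}}$ gives $tR_h\neq\{0\}$. So $R_{h^{-1}}R_h$ is a nonzero right ideal of $R_e$.
\item In the same way, $R_gR_{g^{-1}}$ is a nonzero left ideal of $R_e$.
\item Primeness of $R_e$ then gives $R_{h^{-1}}R_hR_eR_gR_{g^{-1}}\neq\{0\}$. Since $R_hR_e\subseteq R_h$, this set lies in $R_{h^{-1}}(R_hR_g)R_{g^{-1}}$, which forces $R_hR_g\neq\{0\}$.
\end{enumerate}
With that in hand, your final step works, provided you apply primeness correctly. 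You must apply it to $rx$ and to a nonzero $sy$ with $s\in R_{hg}$ and $y\in R_{(hg)^{-1}}$, not to $s$ itself, which does not lie in $R_e$. This gives $rxcsy\neq 0$, hence $0\neq r(xcs)\in rR_g$.

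The detour comes from targeting condition (ii). Right non-degeneracy pairs naturally with condition (i), and that is what the paper verifies. Suppose $0\neq r\in R_h$ and $R_gr=\{0\}$ for some $g\in\Supp(R)$. Then $I=R_{g^{-1}}R_g$ and $J=rR_{h^{-1}}$ are right ideals of $R_e$, both nonzero by right non-degeneracy (for $I$, applied twice). Their product $IJ=R_{g^{-1}}R_g\,r\,R_{h^{-1}}$ is $\{0\}$, contradicting primeness.

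In that version the annihilating factor $R_g$ sits to the left of $r$ and is brought into degree $e$ by left multiplication with $R_{g^{-1}}$. This is independent of the right multiplication by $R_{h^{-1}}$ that uses non-degeneracy at $r$. In your version both corrections happen on the right of $r$. That is exactly what produces the degree mismatch ($h^{-1}g$ versus $g$) you kept hitting, and it is why you need the extra closure lemma for $\Supp(R)$.
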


In Section~\ref{Sec:Applications}, we apply our results to semigroup-graded rings, Leavitt path rings, fractional skew monoid rings, partial skew group rings, and algebraic Cuntz-Pimsner rings.

\section{Proof of Theorem~\ref{thm:Main1}}\label{Sec:First}

Given a unital ring $A$ and a group $G$, the group ring $A[G]$ is defined as the free left (and right) 
$A$-module with $\{u_g\}_{g\in G}$ as its basis. Multiplication is defined by extending the rule
$(a u_g)(b u_h) := ab u_{gh}$, for $a,b\in A$ and $g,h\in G$.
Note that $R:=A[G]$ carries a canonical $G$-grading that is given by $R_g := A u_g$, for $g\in G$.

The proof of Theorem~\ref{thm:Main1} uses a trick based on
the following graded embedding that is inspired by \cite[p.~327]{LeePuczylowski}.

\begin{lem}\label{lem:EmbeddingTrick}
Let $G$ be a group and let $R$ be a unital $G$-graded ring.
Equip the group ring $R[G]$ with its canonical $G$-grading.
There is an injective identity-preserving ring homomorphism $\phi : R \to R[G]$ 
satisfying $\Supp(r)=\Supp(\phi(r))$ for every $r\in R$.
\end{lem}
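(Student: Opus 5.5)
The natural candidate is the standard ``coaction'' map. For a homogeneous element $r_g\in R_g$ set $\phi(r_g):=r_g u_g\in R[G]$, and for a general element $r=\sum_{g\in G} r_g$ set
\[
\phi(r):=\sum_{g\in G} r_g u_g .
\]
Since $R=\oplus_{g\in G}R_g$, this map is well defined and additive. The plan is to check, in order, that $\phi$ is multiplicative, that it preserves the identity, that it is injective, and that it preserves supports.

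\emph{Multiplicativity.} By biadditivity it suffices to check homogeneous elements. For $a\in R_g$ and $b\in R_h$ we have $ab\in R_{gh}$, so $\phi(ab)=ab\,u_{gh}$. On the other hand, $\phi(a)\phi(b)=(a u_g)(b u_h)=ab\,u_{gh}$ by the multiplication rule in the group ring $R[G]$. Hence $\phi(ab)=\phi(a)\phi(b)$.

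\emph{Identity.} We must first note the standard fact that $1_R\in R_e$. Write $1=\sum_g 1_g$. For any homogeneous $x\in R_h$, comparing degree-$h$ components in $x=1\cdot x$ gives $x=1_e x$. By additivity, $1_e$ is then a left identity on all of $R$; symmetrically, it is also a right identity. Hence $1_e=1$, so $1\in R_e$. It follows that $\phi(1)=1\,u_e$, which is the identity of $R[G]$.

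\emph{Injectivity and support.} Since $R[G]$ is free as a left $R$-module on the basis $\{u_g\}_{g\in G}$, the coefficient of $u_g$ in $\phi(r)$ is exactly $r_g$. This coefficient lies in $R u_g=R[G]_g$, so the degree-$g$ component of $\phi(r)$ in the canonical grading is $r_g u_g$, which is nonzero if and only if $r_g\neq 0$. Therefore $\Supp(\phi(r))=\Supp(r)$. In particular, $\phi(r)=0$ forces all $r_g=0$, so $\phi$ is injective.

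None of these steps is a serious obstacle. The only point requiring an argument rather than a direct computation is that $1\in R_e$, which is needed so that $\phi(1)=u_e$; the comparison of degree-$h$ components above settles it.
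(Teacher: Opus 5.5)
Your proposal is correct and follows essentially the same route as the paper: the same map $\phi(\sum_g r_g)=\sum_g r_g u_g$, multiplicativity checked on homogeneous elements, and support preservation and injectivity read off from freeness of $R[G]$ over $\{u_g\}_{g\in G}$. The only difference is that you prove $1_R\in R_e$ directly by comparing components, whereas the paper cites \cite[Prop.~1.1.1]{NVO} for this fact.
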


\begin{proof}
Let $r=\sum_{g\in G} r_g \in R$, with $r_g \in R_g$ zero for all but finitely many $g\in G$.
We define $\phi : R \to R[G]$ by $\phi\left( \sum_{g\in G} r_g \right) := \sum_{g\in G} r_g u_g$.
The map $\phi$ is clearly well defined, additive and injective.

Take $g,h \in G$, $a \in R_g$, and $b \in R_h$.
Using that $a b \in R_g R_h \subseteq R_{gh}$, we note that
\begin{displaymath}
	\phi(ab) = ab u_{gh} = (a u_g)(b u_h) = \phi(a) \phi(b).
\end{displaymath}
It follows that $\phi$ is multiplicative.
By \cite[Prop.~1.1.1]{NVO}, 
$\phi(1_R)=1_R u_e=1_{R[G]}$.
Clearly, $\Supp(r)=\Supp(\phi(r))$.
\end{proof}

For each $g\in G$, we define the map $(\cdot)_g : R \to R_g, \ \sum_{h\in G} r_h \mapsto r_g$. 
We let $Z(R)$ denote the center of $R$.

\begin{lem}\label{lem:CentralComponentWise}
Let $G$ be an abelian group and let $R$ be a $G$-graded ring.
If $f \in Z(R)$, then $f_g \in Z(R)$, for every $g\in G$.
\end{lem}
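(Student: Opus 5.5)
The plan is to compare homogeneous components on both sides of the commutation relation with a homogeneous element, and then pass from homogeneous elements to all of $R$ by additivity. Fix $f\in Z(R)$ and write $f=\sum_{g\in G} f_g$. Since $G$ is abelian and $R=\oplus_{h\in G}R_h$, it is enough to show that $f_g r_h = r_h f_g$ for every $h\in G$ and every $r_h\in R_h$. Once this holds, additivity gives $f_g r = r f_g$ for an arbitrary $r=\sum_h r_h\in R$, which is the claim $f_g\in Z(R)$.

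So fix $h\in G$ and $r_h\in R_h$. Centrality of $f$ gives
\begin{displaymath}
	\sum_{g\in G} f_g r_h \;=\; f r_h \;=\; r_h f \;=\; \sum_{g\in G} r_h f_g .
\end{displaymath}
Here $f_g r_h\in R_gR_h\subseteq R_{gh}$ and $r_h f_g\in R_hR_g\subseteq R_{hg}$. Because $G$ is abelian, $hg=gh$, so both terms indexed by $g$ lie in the same component $R_{gh}$.

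Now apply the projection $(\cdot)_{k}$ for $k=gh$. The map $g'\mapsto g'h$ is a bijection of $G$, so exactly one index contributes to the $R_{gh}$-component on each side, namely $g'=g$ on the left and $g'=g$ on the right (using $hg'=g'h$). The directness of the sum $R=\oplus_k R_k$ therefore gives $f_g r_h = r_h f_g$. Combined with the first paragraph, this finishes the proof.

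There is no real obstacle. The only point needing care is where commutativity of $G$ enters: it ensures that $r_hf_g$ and $f_gr_h$ land in the same homogeneous component. In the non-abelian case this fails, because $r_hf_g\in R_{hg}$ while $f_gr_h\in R_{gh}$, and the componentwise matching breaks down. No unitality of $R$ is used anywhere, so the argument holds for non-unital $R$ as well.
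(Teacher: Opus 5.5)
Your proof is correct and is essentially the paper's argument: the paper also compares the degree-$hg=gh$ components of $r_h f = f r_h$ to get $r_h f_g = (r_h f)_{hg} = (f r_h)_{gh} = f_g r_h$, then extends to all of $R$ by additivity. One small point: the reduction to homogeneous $r_h$ in your first paragraph does not itself use that $G$ is abelian; commutativity is needed only where you later invoke it, to put $f_g r_h$ and $r_h f_g$ in the same component.
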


\begin{proof}
Suppose that $f\in Z(R)$.
Take $h\in G$ and $r_h\in R_h$. Then, for any $g\in G$, we get that $r_h f_g = (r_h f)_{hg} = (f r_h)_{gh} = f_g r_h$.
It now follows that $f_g \in Z(R)$, for every $g\in G$.
\end{proof}

\begin{rem}\label{rem:Dorroh}
Let $D$ be the Dorroh unitization of $R$. Recall that $D := R \times \Z$ is an associative and unital ring 
with addition defined componentwise and multiplication defined by $(r,n)(s,m):=(rs+ns+mr,nm)$ 
for all $r,s \in R$
and
for all $n,m\in \Z$
(for more details, see Dorroh's original article \cite{Dorroh}).

Note that there is an injective ring homomorphism $\psi : R \to D$ defined by $\psi(r):=(r,0)$.
Furthermore, note that if $f$ is a nonzero central idempotent in $R$, then $\psi(f)$ is a nonzero central idempotent in $D$.

Suppose that $R$ is a $G$-graded ring. Then $D$ inherits a natural $G$-grading from $R$.
Indeed, define $D_e := R_e \times \Z$ and $D_g := R_g \times \{0\}$ for every $g\in G \setminus \{e\}$.
Then $D$ becomes a unital $G$-graded ring.
Importantly, note that the support group of $f$ in the $G$-graded ring $R$
equals the support group of $f$ in the unital $G$-graded ring $D$.
Therefore, for the purposes of proving Theorem~\ref{thm:Main1}, after identifying $R$ with its image in $D$,
we may always view $f$ as an element of a unital $G$-graded ring. 
\end{rem}

We are now ready to prove our first main result.

\begin{proof}[Proof of Theorem~\ref{thm:Main1}]
By Remark~\ref{rem:Dorroh}, replacing $R$ by its Dorroh unitization if necessary, we may assume that $R$ is unital.
Let $f$ be a nonzero central idempotent in $R$.
By Lemma~\ref{lem:EmbeddingTrick}, $\phi(f)$ is a nonzero idempotent in the group ring $R[G]$,
and $\Supp(f)=\Supp(\phi(f))$.

Take $h\in G$ and $r \in R$. By Lemma~\ref{lem:CentralComponentWise} and the fact that $G$ is abelian,
we get that
\begin{displaymath}
	r u_h \phi(f) = r u_h \left(\sum_{g\in G} f_g u_g\right) = \sum_{g\in G} rf_g u_{hg} = \left(\sum_{g\in G} f_g u_g \right) r u_h = \phi(f) r u_h.
\end{displaymath}
From this we conclude that $\phi(f) \in Z(R[G])$.

It now follows immediately from Burns's theorem that $\phi(f)$, and hence also $f$, has finite support group.
Since $G$ is abelian, the set of torsion elements forms a subgroup $T$ of $G$, and clearly the support group of $f$ is contained in $T$.
\end{proof}

\section{Proof of Theorem~\ref{thm:Main2}}\label{Sec:Second}

Recall that a group is said to be an \emph{FC-group} if each of its conjugacy classes is finite.
Next, we establish the following generalization of \cite[Prop.~6.1]{Oinert}.

\begin{lem}\label{lem:centralFCsupportGeneralized}
Let $G$ be a group and let $R$ be a $G$-graded ring. 
Suppose that 
at least one of the following two assertions holds:
\begin{enumerate}[{\rm (i)}]
	\item $R_gr \neq \{0\}$
	for all nonzero homogeneous $r\in R$ and all $g\in \Supp(R)$.
	\item $rR_g \neq \{0\}$
	for all nonzero homogeneous $r\in R$ and all $g\in \Supp(R)$.
\end{enumerate}
If $f$ is a nonzero central element in $R$, then the subgroup of $G$
generated by $\Supp(f)$ is an FC-group.
\end{lem}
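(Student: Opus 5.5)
We will show that every element of $\Supp(f)$ has only finitely many conjugates in $H:=\langle \Supp(f)\rangle$. This suffices, because the set of elements of a group with finite conjugacy class is a subgroup (the FC-center). If it contains a generating set of $H$, it is all of $H$, so $H$ is an FC-group. Since elements of $\Supp(f)$ lie in $\Supp(R)$, and products of support elements need not, the argument should be run in $G$ with conjugators drawn from $\Supp(R)$. We then pass from conjugation by $\Supp(R)$ to conjugation by $H$.

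\emph{Key step.} Take $g\in \Supp(R)$ and assume (i). Pick a nonzero $a\in R_g$. For each $h\in\Supp(f)$, centrality gives $af=fa$, and comparing components yields
\begin{displaymath}
	a f_{g^{-1}hg}\in R_{hg}, \qquad (af)_{hg} = a f_{g^{-1}hg} = (fa)_{hg} = f_h a .
\end{displaymath}
From this we want to conclude that $f_h\neq 0$ forces $f_{g^{-1}hg}\neq 0$, i.e.\ $g^{-1}\Supp(f)g\subseteq \Supp(f)$. To get this we need some $a\in R_g$ with $f_h a\neq 0$, which is exactly what (ii) provides when applied to the nonzero homogeneous element $f_h$. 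Under (i) we argue symmetrically with $a'\in R_{g}$ on the other side: $(fa')_{gk}=f_{gkg^{-1}}a'$ and $(a'f)_{gk}=a'f_k$. Given $f_k\neq0$, (i) yields $a'\in R_g$ with $a'f_k\neq 0$, hence $f_{gkg^{-1}}\neq0$, i.e.\ $g\Supp(f)g^{-1}\subseteq\Supp(f)$. In either case $\Supp(f)$ is stable under conjugation by $g$ (resp.\ $g^{-1}$) for every $g\in\Supp(R)$. Because $\Supp(f)$ is finite and conjugation is injective, the inclusion is an equality, and so the reverse conjugation also preserves $\Supp(f)$.

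\emph{Passing to $H$.} By the equality just obtained, $\Supp(f)$ is invariant under the group generated by $\Supp(R)$, which contains $\Supp(f)$ and hence $H$. Therefore each $h\in\Supp(f)$ has its $H$-conjugacy class inside the finite set $\Supp(f)$. The generators of $H$ thus lie in the FC-center of $H$, and so $H$ is an FC-group. If $f=0$ there is nothing to prove, which is why $f$ is assumed nonzero.

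The main obstacle is the careful bookkeeping of which side the homogeneous multiplier goes on, so that hypothesis (i) or (ii) applies to the nonzero component $f_h$ in the right order. The upgrade from one-sided inclusion to invariance under the whole group also needs care, and it is resolved by the finiteness of $\Supp(f)$.
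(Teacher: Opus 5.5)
Your argument is correct and is essentially the paper's proof. You compare homogeneous components of $af=fa$ for a homogeneous $a$ supplied by (i) or (ii), deduce that conjugation maps the finite set $\Supp(f)$ into, hence onto, itself, and then conclude via the FC-center being a subgroup, which is the same product-of-conjugates argument the paper writes out with the bound $|\Omega|^m$. Conjugating by all of $\Supp(R)$ rather than only by $\Supp(f)$ is harmless. The only slip is expository: the key step opens with ``assume (i)'', but the first displayed computation actually uses (ii), which you correct in the following sentence.
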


\begin{proof}
Let $f = \sum_{s\in G} f_s$ be a nonzero central element in $R$,
and let $H$ be the subgroup of $G$ that is generated by the set $\Supp(f)$.
Write $\Omega := \Supp(f) \cup \{e\} \cup \Supp(f)^{-1}$.
We begin by showing that $\Omega$ is invariant under conjugation by elements from $\Omega$.
The element $e$ is obviously fixed under conjugation by any element, and any element is fixed under conjugation by $e$.

Take $s,g\in \Supp(f) \setminus \{e\}$.
Consider $r:=f_s$.

\underline{Case A. (i) holds:}

Choose a nonzero $a \in R_g$ such that $a f_s \neq 0$.
Note that $g s \in \Supp(a f) = \Supp(f a)$, because $a f = f a$.
Hence, there is some $t\in \Supp(f)$ such that $g s = t g$.
In other words, $g s g^{-1} = t \in \Supp(f) \subseteq \Omega$.

We claim that $g^{-1} s g \in \Supp(f)$ also holds.
The preceding paragraph shows that $g\Supp(f)g^{-1} \subseteq \Supp(f)$.
Since conjugation by $g$ is injective and $\Supp(f)$ is finite, this inclusion is equality.
Hence, there is some $k > 0$
such that $g^k s g^{-k} = s$. 
If $k=1$, then clearly $g^{-1}s g = s \in \Supp(f) \subseteq \Omega$.
If $k>1$, then we note that $g^{-1} s g = g^{k-1} s g^{-k+1} \in \Supp(f) \subseteq \Omega$.

From $gsg^{-1} \in \Supp(f)$, we get that $g s^{-1} g^{-1} \in \Supp(f)^{-1} \subseteq \Omega$.
Furthermore, from $g^{-1} s g \in \Supp(f)$, we get that $g^{-1} s^{-1} g \in \Supp(f)^{-1} \subseteq \Omega$.

\underline{Case B. (ii) holds:}

An argument completely analogous to that in Case A allows us to conclude that
$g^{-1} s g \in \Supp(f) \subseteq \Omega$ and $g s g^{-1} \in \Supp(f) \subseteq \Omega$.
It then follows that $g^{-1} s^{-1}g  \in \Supp(f)^{-1} \subseteq \Omega$
and $g s^{-1} g^{-1} \in \Supp(f)^{-1} \subseteq \Omega$.

%
%

In both cases, we are able to establish that $\Omega$ is invariant under conjugation by elements from $\Omega$.
Every element of $H$ is a word in $\Omega$.
Thus, $\Omega$ is invariant under conjugation by elements from $H$.

Finally, let $h$ be an arbitrary element of $H$.
Then there is some $m \geq 1$ such that 
$h = s_1 \cdots s_m$ 
with $\{s_1,\ldots,s_m\} \subseteq \Omega$.
For any $w \in H$, we may write
\begin{displaymath}
	w h w^{-1} = (ws_1 w^{-1}) \cdots (w s_m w^{-1}).
\end{displaymath}
By the preceding paragraph, each $ws_i w^{-1}$ lies in the finite set $\Omega$.
Hence, the conjugacy class of $h$ contains at most $|\Omega|^m$ elements.
In particular, it is finite.
Thus, $H$ is an FC-group.
\end{proof}

The following example shows that the two conditions  
in Lemma~\ref{lem:centralFCsupportGeneralized} are independent.

\begin{exmp}
Let $G:=\Z$ and consider the triangular matrix ring
\begin{displaymath}
		R:=\begin{pmatrix}
	\Q	&	\Q[t] \\
	0		&	\Q[t]
\end{pmatrix}
\end{displaymath}
with the usual addition and multiplication. Endow $R$ with a $G$-grading by setting
\begin{itemize}
	\item $R_n:=
\left\{
\begin{pmatrix}
	0 & 0 \\
	0 & 0
\end{pmatrix}\right\}$
for $n<0$,
\item $R_0:=
	\begin{pmatrix}
		\Q & 0 \\
		0				& \Q
	\end{pmatrix}$, and
	\item $R_n := 
\begin{pmatrix}
	0 & \Q t^{n-1} \\
	0 & \Q t^n
\end{pmatrix}$
for $n>0$.
\end{itemize}
It is easy to verify that condition (ii) of Lemma~\ref{lem:centralFCsupportGeneralized} is satisfied.
But note that for the nonzero homogeneous element
$r:=\begin{pmatrix}
	1 & 0 \\
	0 & 0
\end{pmatrix} \in R_0$
we have $R_1 r = \{0\}$.
Thus, this grading fails to satisfy condition (i) of Lemma~\ref{lem:centralFCsupportGeneralized}.
Passing to the opposite ring with the induced $G$-grading gives an example satisfying condition (i) but not condition (ii).
\end{exmp}

We pause for a moment to record a consequence of Lemma~\ref{lem:centralFCsupportGeneralized} 
that is of independent interest.
Recall that a group $G$ is said to be an \emph{ICC group} if 
each non-trivial element of the group has an infinite conjugacy class (see e.g. \cite{deCornulier}). 
Non-trivial abelian groups cannot be ICC. But there exist torsion ICC groups.
From that perspective, the next corollary complements the picture provided by Corollary~\ref{cor:AbelianTorFree}.

\begin{cor}\label{cor:ICC}
Let $G$ be an ICC group and let $R$ be a 
$G$-graded ring with $\Supp(R)=G$.  
Suppose that 
at least one of the following two assertions holds:
\begin{enumerate}[{\rm (i)}]
	\item $R_gr \neq \{0\}$
	for all nonzero homogeneous $r\in R$ and all $g\in G$.
	\item $rR_g \neq \{0\}$
	for all nonzero homogeneous $r\in R$ and all $g\in G$.
\end{enumerate}
If $f$ is a central idempotent in $R$,
then $f \in R_e$.
\end{cor}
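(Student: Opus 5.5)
The plan is to apply Lemma~\ref{lem:centralFCsupportGeneralized} directly. Since $\Supp(R)=G$, conditions (i)/(ii) of the corollary are exactly conditions (i)/(ii) of that lemma. The case $f=0$ is trivial, so let $f$ be a nonzero central idempotent; centrality is all we need. Let $H$ be the subgroup generated by $\Supp(f)$. By Lemma~\ref{lem:centralFCsupportGeneralized}, $H$ is an FC-group. We must show $\Supp(f)\subseteq\{e\}$, i.e.\ $H$ is trivial.

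The key extra input is the invariance established inside the proof of Lemma~\ref{lem:centralFCsupportGeneralized}, now strengthened to conjugation by all of $G$. Take $s\in\Supp(f)$ and any $g\in G$. Assume (i). Since $g\in\Supp(R)=G$ and $f_s\neq0$ is homogeneous, pick $a\in R_g$ with $af_s\neq0$. Then $gs\in\Supp(af)=\Supp(fa)=\Supp(f)g$, so $gsg^{-1}\in\Supp(f)$. Case (ii) is symmetric: pick $a\in R_g$ with $f_sa\neq 0$, so $sg\in\Supp(fa)=\Supp(af)=g\Supp(f)$, hence $g^{-1}sg\in\Supp(f)$; letting $g$ range over $G$ covers all conjugates. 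Thus the $G$-conjugacy class of each $s\in\Supp(f)$ lies in the finite set $\Supp(f)$.

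In an ICC group, the only element with finite conjugacy class is $e$. Therefore $\Supp(f)\subseteq\{e\}$, i.e.\ $f\in R_e$.

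The only subtle point is that the lemma as stated only yields conjugation-invariance within $H$, not within $G$. This is why I repeat the support computation with arbitrary $g\in G$, which is exactly where the hypothesis $\Supp(R)=G$ is used. There is no real obstacle beyond that observation. Note also that finiteness of $\Supp(f)$ suffices here, so the idempotent hypothesis is not needed.
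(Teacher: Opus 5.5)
Your proof is correct and follows essentially the same route as the paper. You rerun the support computation from the proof of Lemma~\ref{lem:centralFCsupportGeneralized} with an arbitrary $g\in G=\Supp(R)$, conclude that the whole $G$-conjugacy class of each $s\in\Supp(f)$ lies in the finite set $\Supp(f)$, and then invoke the ICC property. (Working directly with $\Supp(f)$ instead of the paper's $\Omega$ is fine, and your appeal to the FC-group conclusion of the lemma is harmless but not actually needed.)
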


\begin{proof}
Suppose that $f$ is a central idempotent in $R$.
If $f=0$, then $f\in R_e$. Now, suppose that $f$ is nonzero.

The same argument as in Lemma~\ref{lem:centralFCsupportGeneralized} applies with $g\in \Supp(R)=G$, since the hypotheses supply a homogeneous element of degree $g$ whose product with $f_s$ is nonzero on the appropriate side.
Thus, the finite set $\Omega$ is invariant with respect to conjugation by every $g \in G$.
Using that $G$ is an ICC group, we conclude that $\Omega$ contains no non-trivial element.
Hence, $\Supp(f) \subseteq \{e\}$, i.e. $f \in R_e$.
\end{proof}

Recall that a group is said to be \emph{locally normal} if every finite subset is contained
in a finite normal subgroup. We recall the following result from \cite[Lem.~1]{Burns}.

\begin{lem}[Burns]\label{lem:FCisomorphic}
Any FC-group $G$ is isomorphic to a subdirect product of a torsion-free
abelian group $A$ with a locally normal group $B$.
\end{lem}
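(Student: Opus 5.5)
The plan is to split $G$ along its torsion part. First I would show that the set $T$ of torsion elements of an FC-group $G$ is a normal subgroup, that $T$ is locally normal, and that $G/T$ is torsion-free abelian. Then I would find a normal subgroup $N$ with $N\cap T=\{e\}$ and $G/N$ locally normal. Once these are in place, the map $G\to G/T\times G/N$ is an injective homomorphism because its kernel is $T\cap N=\{e\}$. Its image projects onto each factor, so $G$ is a subdirect product of $A:=G/T$ and $B:=G/N$.

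\textbf{Step 1 (the torsion part).} Let $x_1,\dots,x_n$ be torsion elements of $G$. Their conjugates form a finite conjugation-invariant set $X$ of torsion elements. By Dietzmann's lemma, $\langle X\rangle$ is then a finite normal subgroup. Hence $T$ is a subgroup, and it is locally normal.

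\textbf{Step 2 ($G/T$ is torsion-free abelian).} Torsion-freeness of $G/T$ is immediate once $T$ is a subgroup. For commutativity, take $g,h\in G$. Then $K=\langle g,h\rangle$ is a finitely generated FC-group. The centralizers of its finitely many generators have finite index, so $Z(K)$ has finite index in $K$. Schur's theorem then gives that $K'$ is finite, so $[g,h]\in T$.

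\textbf{Step 3 (choosing $N$).} Use Zorn's lemma to pick $N\le Z(G)$ maximal with respect to $N\cap T=\{e\}$. Since $N$ is central, it is normal. Next I check that $Z(G)/N$ is periodic. Let $z\in Z(G)$. By maximality, $\langle N,z\rangle$ meets $T$ nontrivially, so $z^k n=t\neq e$ for some $n\in N$, $t\in T$ and $k\in\Z$. We have $k\neq 0$, since otherwise $t=n\in N\cap T=\{e\}$. Because $z$ and $n$ commute, raising to the order $m$ of $t$ gives $z^{km}\in N$.

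\textbf{Step 4 (periodicity of $G/N$).} It remains to show that $G/N$ is periodic. Quotients of FC-groups are FC-groups, and a periodic FC-group is locally normal by Step 1. Since $Z(G)/N$ is periodic, it suffices to know that $G/Z(G)$ is periodic. This is the classical fact (B.~H.~Neumann) that in an FC-group every element has a power lying in the centre.

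\textbf{Main obstacle.} I expect this last fact to be the hardest step. Step 2 only gives, for each pair $g,x$, a power of $g$ commuting with $x$, with an exponent that a priori depends on $x$. My first attempt is to make the exponent uniform in $x$. Let $C=\bigcap_i C_G(g_i)$, where $g_1,\dots,g_k$ are the conjugates of $g$; this is a normal subgroup of finite index, say $d$. Then I would apply a transfer-type argument, using the finiteness of $\langle g^G\rangle'$ from Dietzmann and Schur, to show that $g^{d\,e}$ is central for a suitable fixed $e$. If that gets messy, I will simply cite Neumann's theorem, as Burns does.
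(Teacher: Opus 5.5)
Your proof is correct. The paper does not prove this lemma; it quotes it from Burns [Lem.~1], so there is no in-paper argument to compare against. Your argument is the standard one: the torsion set $T$ is a locally normal subgroup by Dietzmann, $G/T$ is torsion-free abelian by Schur applied to $\langle g,h\rangle$, you take $N\le Z(G)$ maximal with $N\cap T=\{e\}$, and then use the embedding $G\hookrightarrow G/T\times G/N$. It makes the lemma self-contained modulo classical facts, which the paper simply imports. In Step 3 you should treat $z\in N$ separately, where $zN$ is trivially of finite order, since maximality only gives information when $z\notin N$.

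The step you flag as the main obstacle is in fact elementary and needs no transfer argument. Since $C_G(g)$ has finite index, write $G=\bigcup_{j=1}^r C_G(g)x_j$. For each $j$, the elements $g^{-i}x_jg^{i}$ ($i\in\Z$) all lie in the finite class $x_j^G$. Hence $g^{-i}x_jg^{i}=g^{-i'}x_jg^{i'}$ for some $i\neq i'$, which means $g^{n_j}$ commutes with $x_j$, where $n_j:=|i'-i|\geq 1$. Put $n:=\mathrm{lcm}(n_1,\dots,n_r)$. Then $g^n$ commutes with each $x_j$ and with every $c\in C_G(g)$, so $g^n c x_j = c g^n x_j = c x_j g^n$, and therefore $g^n\in Z(G)$. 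The exponent only has to be uniform over finitely many coset representatives, not over all $x\in G$. This makes Step 4 fully elementary and removes the need to cite Neumann's theorem.
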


\begin{rem}\label{rem:subdirect}
Suppose that a group $G$ is a subdirect product of $A$ and $B$, for groups $A$ and $B$.
After identifying $G$ with its image in $A \times B$, 
there are surjective group homomorphisms $\pi_A : G \to A, \ (a,b) \mapsto a$
and $\pi_B : G \to B, \ (a,b) \mapsto b$.

Consider $N_A := \ker(\pi_A)$ which is a normal subgroup of $G$.
By the first isomorphism theorem, 
we have $G/N_A \cong A$.
Also note that $N_A$ is isomorphic to a subgroup of $B$. 
\end{rem}

\begin{rem}\label{rem:SwitchGrading}
We recall some standard notation from \cite[Sec.~1.2]{NVO}.
Let $R$ be a $G$-graded ring. For any non-empty subset $X$ of $G$, 
we will write $R_X := \oplus_{x\in X} R_x$.
If $H$ is a subgroup of $G$, then $R_H$ is an $H$-graded subring of $R$.
If $N$ is a normal subgroup of $G$, then we may view $R$ as a $G/N$-graded ring.
For any $C \in G/N$ we set $R_C := \oplus_{x\in C} R_x$.
It is easy to see that $R = \oplus_{C \in G/N} R_C$, and $R_C R_D \subseteq R_{CD}$, for all $C,D \in G/N$.
Note, in particular, that $N$ is the identity element of $G/N$.
When we view $R$ as a $G$-graded ring, the so-called \emph{principal component} will be the subring $R_e$.
But when viewed as a $G/N$-graded ring, the principal component will be the subring $R_N$.
\end{rem}

Next, we prove our second main result.

\begin{proof}[Proof of Theorem~\ref{thm:Main2}]
Let $f$ be a nonzero central idempotent in $R$, and let $H$ be the support group of $f$.
Clearly, $f$ is contained in the $H$-graded subring $R_H$.
By Lemma~\ref{lem:centralFCsupportGeneralized}, $H$ is an FC-group.
Using Lemma~\ref{lem:FCisomorphic}, we can view $H$ as a subdirect product of $A$ and $B$, where $A$ is torsion-free abelian and $B$ is locally normal.
By Remark~\ref{rem:subdirect}, there is a normal subgroup $N_A$ of $H$ such that $H/N_A \cong A$.

By the aforementioned isomorphism and Remark~\ref{rem:SwitchGrading}, we may view $R_H$ as an $A$-graded ring with principal component $R_{N_A}$.
Clearly, $f$ is central in $R_H$.
Hence, by Corollary~\ref{cor:AbelianTorFree}, 
$f$ is contained in $R_{N_A}$ since $A$ is torsion-free abelian.

Note that subgroups of locally normal groups are again locally normal.
Therefore, since $B$ is locally normal, the subgroup $N_A$ is also locally normal.
By the preceding paragraph, $\Supp(f)$ is contained in $N_A$, and hence there is a finite subgroup of $N_A$ containing $\Supp(f)$.
Thus, the support group of $f$ must be finite.
\end{proof}

\begin{rem}
In Theorem~\ref{thm:Main1}, $G$ is abelian, and hence
the finite support group obtained is normal in $G$.
In Theorem~\ref{thm:Main2},
the finite support group obtained is normal in $G$ whenever $\Supp(R)=G$.
Indeed, this is a direct consequence of the proof of Lemma~\ref{lem:centralFCsupportGeneralized}.
In general, however, the support group of a nonzero central idempotent need not be normal in $G$.
To see this, let $G:=S_3$ be the symmetric group on three elements and
consider the subgroup $K$ generated by the permutation $(12)$. 
The group ring $R:=\Q[K]$ has its canonical $K$-grading that can be extended to  
a $G$-grading by setting $R_g := \{0\}$ for $g\in G \setminus K$.
The element
$f:=\frac{1}{2}(1+u_{(12)})$
is a nonzero central idempotent in $R$. Clearly, $K$ is the support group of $f$, but $K$ is not normal in $G$.
\end{rem}


We now present three examples, beginning with a familiar one.

\begin{exmp}
Let $A$ be a unital ring and consider the polynomial ring $R:=A[t]$ in one indeterminate $t$.
Define a $\Z$-grading on $R$ by $R_n:=At^n$ for $n\geq 0$, and $R_n:=\{0\}$ for $n<0$.
By Corollary~\ref{cor:AbelianTorFree}, every central idempotent in $R$ must be contained in $R_0=A$.
\end{exmp}

Burns's theorem, as well as the main results in this article, cannot be generalized to hold for arbitrary idempotents,
as the next example illustrates.

\begin{exmp}\label{ex:NonExample1}
Consider the group ring $R:=M_2(\C)[\Z]$ and let $t$ be a generator for $\Z$.
The element
$f:=
\begin{pmatrix}
1 & 0 \\
0 & 0
\end{pmatrix}
u_e
+
\begin{pmatrix}
0 & 1 \\
0 & 0
\end{pmatrix}
u_t$
is a non-central idempotent in $R$.
We have $\Supp(f)=\{e,t\}$.
Hence, the support group of $f$ is infinite.
\end{exmp}

The next example illustrates a limitation of 
possible generalizations of Theorem~\ref{thm:Main1} and Theorem~\ref{thm:Main2}.

\begin{exmp}\label{ex:Failure1}
Let $G:=D_\infty=\langle s,t \mid s^2=t^2=e\rangle$ be the infinite dihedral group,
and let $R:=\Q^4$ be the commutative direct product $\Q$-algebra.
Consider the four elements
\begin{displaymath}
1_R=(1,1,1,1), \ \
a:=(1,1,-1,-1), \ \
b:=(1,-1,0,0), \ \text{ and } \
c:=(0,0,1,-1).
\end{displaymath}
We define additive subgroups of $R$ as follows:
\begin{displaymath}
R_e:=\Q 1_R \oplus \Q a, \ \
R_s := \Q b, \ \
R_t := \Q c, \ 
\text{ and } \ 
R_g := \{(0,0,0,0)\} \ \text{ for } \ g \in G \setminus \{e,s,t\}.	
\end{displaymath}
Note that $\{1_R,a,b,c\}$ is a basis for $R$ as a $\Q$-vector space.
Thus, $R=\oplus_{g\in G} R_g$.

We shall now make a few observations:
\begin{itemize}
	\item $a^2=1_R \in R_e$, and hence $R_eR_e \subseteq R_e$.
	\item $ab=b$ and $ac=-c$, and hence $R_e R_s \subseteq R_s$ and $R_e R_t \subseteq R_t$.
	\item $b^2=(1,1,0,0)=\frac{1}{2}(1+a) \in R_e$, and hence $R_s R_s \subseteq R_e = R_{s^2}$.
	\item $c^2=(0,0,1,1)=\frac{1}{2}(1-a) \in R_e$, and hence $R_t R_t \subseteq R_e = R_{t^2}$.
	\item $bc=0$, and hence $R_s R_t = \{0\} \subseteq R_{st}$.
\end{itemize}
We conclude that $R_g R_h \subseteq R_{gh}$ for all $g,h \in G$. This shows that $R$ is $G$-graded.
Note, however, that $R_s c = \{0\}$ and $c R_s= \{0\}$. Hence, neither assertion (i) nor assertion (ii) in Theorem~\ref{thm:Main2} is  satisfied. Furthermore, $D_\infty$ is non-abelian and thus violates the requirement in Theorem~\ref{thm:Main1}.

Now, consider the element 
\begin{displaymath}
f:=\frac{1}{2}1_R+\frac{1}{2}b+\frac{1}{2}c
=\frac{1}{2}(1,1,1,1)+\frac{1}{2}(1,-1,0,0)+\frac{1}{2}(0,0,1,-1)
=(1,0,1,0).
\end{displaymath}
Note that $f$ is a nonzero central idempotent in $R$.
But the subgroup generated by $\Supp(f)=\{e,s,t\}$ is equal to the infinite group $D_\infty$.
\end{exmp}

The following lemma generalizes \cite[Lem.~2.10(b)]{LLOW}.

\begin{lem}\label{lem:SunitsTrick}
Let $G$ be a group and let $R$ be an $s$-unital $G$-graded ring.
Then $r \in r R_e \cap R_e r$ for every $r\in R$.
\end{lem}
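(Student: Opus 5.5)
Since $R$ is $s$-unital, for a given $r\in R$ we can choose $u\in R$ with $r = ur$. The task is to replace $u$ by its component in $R_e$, and the way to do this is to expand $r$ and $u$ into homogeneous components and compare components of a fixed degree.

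Take $r\in R$ and write $r=\sum_{g\in G} r_g$ with finite support. Choose $u\in R$ with $r=ur$ and write $u=\sum_{h\in G} u_h$. This is not quite enough: we want each homogeneous component $r_g$ to be fixed by a single element of $R_e$ acting on the left, and the same element must work for all of the finitely many $g\in\Supp(r)$ at once. To get this, first I will show that for any finite collection of elements of $R$ there is one $u\in R$ with $ux=x$ for all of them. This is the standard fact that $s$-unital rings have local units in this sense (Tominaga's lemma). The proof is an iterative construction. If $u_1 x_1=x_1$, put $y_i := x_i - u_1x_i$ for $i\geq 2$. By induction pick $v$ with $vy_i=y_i$ for $i\geq 2$, and set $u:=u_1+v-vu_1$. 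A short check gives $ux_i = x_i$ for every $i$; for instance, $(u_1+v-vu_1)x_i = u_1x_i + v(x_i-u_1x_i) = u_1x_i + y_i = x_i$ for $i\ge 2$, and for $i=1$ the term $v(x_1-u_1x_1)=0$. I regard this as routine.

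Now apply this to the finitely many components $r_g$, $g\in\Supp(r)$, to obtain $u$ with $ur_g=r_g$ for each $g$. Comparing components of degree $g$ on both sides of $ur_g=r_g$ gives $r_g=(ur_g)_g=u_e r_g$, because $u_h r_g\in R_{hg}$ and $hg=g$ holds only when $h=e$. Summing over $g$ yields $r=u_e r\in R_e r$. The symmetric argument, using right local units and comparing degree-$g$ components of $r_g u = r_g$, gives $r=ru'_e\in rR_e$. Together these give $r\in rR_e\cap R_e r$.

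The main obstacle is the passage from the pointwise definition of $s$-unitality to a common local unit for finitely many elements; everything after that is a direct comparison of homogeneous components. If one prefers to avoid that lemma, note that a single $u$ with $ur=r$ already yields $r=u_e r$ whenever $r$ is homogeneous. To handle a general $r$ one then needs the common local unit for its components, or one applies $s$-unitality to each component and carries out the iterative construction above inside $R_e$; that construction is legitimate because $R_e$ is a subring, so the element $u_1+v-vu_1$ built from elements of $R_e$ stays in $R_e$.
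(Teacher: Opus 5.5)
Your proof is correct and follows essentially the same route as the paper: use Tominaga's lemma to get a common local unit for the finitely many homogeneous components of $r$, then keep only its degree-$e$ component. The only difference is minor: the paper cites Tominaga for a single two-sided unit $a$ with $a r_{g_i} = r_{g_i} a = r_{g_i}$, whereas you prove the one-sided version yourself and use separate left and right units, which suffices since the statement does not require the same element of $R_e$ on both sides.
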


\begin{proof}
Take $r\in R$.
If $r=0$, then there is nothing to prove.
Suppose therefore that $r\neq 0$.
Write $\Supp(r)=\{g_1,\ldots,g_n\}$.
Consider the finite set $\{r_{g_1},\ldots,r_{g_n}\} \subseteq R$.
By $s$-unitality and \cite{Tominaga} (see also \cite[Prop.~2.1]{LLOW}), there is some $a \in R$ such that
$a r_{g_i} = r_{g_i} a = r_{g_i}$ for every $i\in \{1,\ldots,n\}$.
Note that $a_e r_{g_i} = r_{g_i} a_e = r_{g_i}$ for every $i\in \{1,\ldots,n\}$.
From this we get $a_e r = r a_e = r$.
In particular, $r \in r R_e \cap R_e r$.
\end{proof}

We end this section by proving the last three corollaries from the introduction.

\begin{proof}[Proof of Corollary~\ref{cor:Strongly}]
Suppose that
$r\in R$ satisfies
$R_g r = \{0\}$ for some $g\in G$.
Then, by Lemma~\ref{lem:SunitsTrick} and the strong grading, we have
$r \in R_e r = R_{g^{-1}} R_g r = \{0\}$.
The first part of the claim now follows from Theorem~\ref{thm:Main2}.
If $G$ is torsion-free, then every finite support group is trivial,
and hence all central idempotents will be contained in $R_e$.
\end{proof}

\begin{rem}
Mihovski \cite{Mihovski} considered the unital case of Corollary~\ref{cor:Strongly} using a different method.
His argument is particularly well suited for $G$-crossed products over $\Q$-algebras.
However, in the full generality of unital strongly $G$-graded rings, 
certain steps in the proof of \cite[Lem.~1]{Mihovski}
do not appear to apply directly without further hypotheses;
this concerns, in particular, the initial claim and the Taylor expansion on \cite[p.~106]{Mihovski}.
\end{rem}

\begin{proof}[Proof of Corollary~\ref{cor:Crystalline}]
A crystalline graded ring $A*G$, as defined in \cite{NauwVO}, carries a natural $G$-grading.
It is clear from the definition that for every $g\in G$, there is a nonzero element in $(A*G)_g$ that is 
neither a left nor a right zero-divisor in $A*G$ (see \cite[Cor.~2.7]{NauwVO}).
The first part of the claim now follows from Theorem~\ref{thm:Main2}.
If $G$ is torsion-free, then every finite support group is trivial,
and hence all central idempotents will be contained in $(A*G)_e=A$.
\end{proof}

\begin{proof}[Proof of Corollary~\ref{cor:NonDeg}]
Suppose that $R$ is right non-degenerately $G$-graded. 
Seeking a contradiction, 
suppose that there is a nonzero element $r\in R_h$, for some $h\in G$, 
such that $R_g r = \{0\}$ for some $g\in \Supp(R)$.
Consider the sets $I:=R_{g^{-1}} R_g$ and $J:=r R_{h^{-1}}$ which are right ideals of $R_e$.
Clearly, by right non-degeneracy of the grading, $J$ is nonzero.
By right non-degeneracy of the grading it also follows that $R_{g^{-1}}$ is nonzero,
and after using right non-degeneracy of the grading yet again, we see that $I$ must be nonzero. 
Note that $IJ=R_{g^{-1}} R_g r R_{h^{-1}} = \{0\}$.
But $R_e$ is prime, and in a prime ring a product of two nonzero right ideals is nonzero.
This is a contradiction. 
The proof in the left non-degenerately $G$-graded case is analogous. 
The first part of the claim now follows from Theorem~\ref{thm:Main2}.
If $G$ is torsion-free, then every finite support group is trivial,
and hence all central idempotents will be contained in $R_e$.
\end{proof}

\section{Applications}\label{Sec:Applications}

In this final section, we apply our results to various types of graded rings.

\subsection{Semigroup-graded rings}

Recall that a semigroup $S$
is said to be \emph{right reversible} if $Sa \cap Sb \neq \emptyset$ for all $a$ and $b$ in $S$.
Left reversibility is defined analogously.
Ore has shown that any right reversible cancellative semigroup can be embedded in a group (see \cite[Thm.~1.23]{CliffordPreston}).
The same conclusion can be shown to hold for left reversible cancellative semigroups.

The \emph{support semigroup} of a nonzero element $r$ in an $S$-graded ring is the subsemigroup of $S$ generated by $\Supp(r)$.

\begin{prop}\label{prop:semigroups}
Let $S$ be a cancellative semigroup and let $R$ be an $S$-graded ring.
Furthermore, suppose that at least one of the following assertions holds:
\begin{enumerate}[{\rm (a)}]
	\item $S$ is commutative.
	\item $S$ is right (or left) reversible, and 
at least one of the following two assertions holds:
\begin{enumerate}[{\rm (i)}]
	\item $R_gr \neq \{0\}$
	for all nonzero homogeneous $r\in R$ and all $g\in \Supp(R)$.
	\item $rR_g \neq \{0\}$
	for all nonzero homogeneous $r\in R$ and all $g\in \Supp(R)$.
\end{enumerate}
\end{enumerate}

\noindent If $f$ is a nonzero central idempotent in $R$,
then $f$ has finite support semigroup in $S$.
\end{prop}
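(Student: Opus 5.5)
The plan is to embed $S$ into a group $G$ and regard $R$ as a $G$-graded ring, so that Theorem~\ref{thm:Main1} or Theorem~\ref{thm:Main2} applies. Then we transfer finiteness of the support group back to finiteness of the support semigroup.

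First, we obtain the embedding $S \hookrightarrow G$. In case (a), $S$ is commutative and cancellative, so it embeds into its Grothendieck group of fractions $G$, which is abelian. In case (b), the right (or left) reversible cancellative semigroup $S$ embeds into a group $G$ by Ore's theorem \cite[Thm.~1.23]{CliffordPreston}, as recalled above. Identifying $S$ with its image, we set $R'_g := R_g$ for $g \in S$ and $R'_g := \{0\}$ for $g \in G\setminus S$. Then $R = \oplus_{g\in G} R'_g$, and $R'_gR'_h \subseteq R'_{gh}$ holds trivially when $g$ or $h$ lies outside $S$. So $R$ is a $G$-graded ring with the same homogeneous elements, the same supports of elements, and $\Supp(R)$ unchanged as a subset of $S\subseteq G$. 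Consequently, hypotheses (i)/(ii) of Theorem~\ref{thm:Main2} hold verbatim in case (b). Centrality and idempotency of $f$ do not depend on the grading.

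Next, we apply the theorems. In case (a), Theorem~\ref{thm:Main1} applies because $G$ is abelian. In case (b), Theorem~\ref{thm:Main2} applies. Either way, the subgroup $H$ of $G$ generated by $\Supp(f)$ is finite. The support semigroup of $f$ is the subsemigroup of $S$ generated by $\Supp(f)$. Since $S$ embeds in $G$, this subsemigroup is contained in $H$, and hence it is finite.

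The only genuine point to check is that the grading extension is legitimate and preserves the hypotheses. The main subtlety is the left reversible variant in (b), which needs the mirror version of Ore's embedding. We would handle this by passing to the opposite semigroup, or by simply invoking the stated remark that the same conclusion holds. Everything else is formal.
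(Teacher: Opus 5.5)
Your proposal is correct and matches the paper's proof. You embed $S$ in a group via Ore's theorem (abelian in case (a)), extend the grading by zero outside $S$, apply Theorem~\ref{thm:Main1} or Theorem~\ref{thm:Main2}, and observe that the support semigroup sits inside the finite support group.
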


\begin{proof}
Since $S$ is cancellative, and any commutative semigroup is right (and left) reversible,
it follows from \cite[Thm.~1.23]{CliffordPreston}, in both cases (a) and (b), that
$S$ embeds in a group $G$.

Now, $R$ is already $S$-graded. For each $g\in G \setminus S$, we set $R_g:=\{0\}$.
We may now view $R$ as a $G$-graded ring.
Let $f$ be a nonzero central idempotent in $R$.

In case (a), the group $G$ can in fact be assumed to be abelian.
By Theorem~\ref{thm:Main1}, the support group of $f$ in $G$ is finite.
Hence, the support semigroup of $f$ in $S$ must also be finite.

In case (b), we may apply Theorem~\ref{thm:Main2} to reach the same conclusion.
\end{proof}

\subsection{Leavitt path rings}

For the definition of Leavitt path algebras and Leavitt path rings, we refer the reader to \cite{AbramsEtAl} and \cite[Sec.~4]{LLOW2}, respectively.
The following proposition is an immediate consequence of Corollary~\ref{cor:AbelianTorFree}.

\begin{prop}
Let $A$ be a unital ring, let $E$ be a directed graph, and let $R:=L_A(E)$ be the corresponding Leavitt path ring.
Equip $R$ with its canonical $\Z$-grading.
If $f$ is a central idempotent in $R$, then $f\in R_0$.
\end{prop}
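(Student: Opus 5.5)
The proposition follows from Corollary~\ref{cor:AbelianTorFree} once we have two things: a $\Z$-grading on $R$ indexed by the group $\Z$, which is torsion-free abelian, and a description of the principal component as $R_0$. The plan is to recall the canonical grading, verify that it is a genuine $\Z$-grading in the sense of the introduction, and then invoke the corollary. No unitality or non-degeneracy hypotheses are needed, so the possible non-unitality of $L_A(E)$ for graphs with infinitely many vertices causes no trouble.

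First, recall from \cite[Sec.~4]{LLOW2} how the canonical $\Z$-grading on $L_A(E)$ is defined. Vertices $v$ have degree $0$, real edges $e$ have degree $1$, and ghost edges $e^*$ have degree $-1$. We set $R_n$ to be the additive subgroup spanned by the elements $a\,\alpha\beta^*$ with $a\in A$ and $\alpha,\beta$ paths such that $|\alpha|-|\beta|=n$. The defining relations of $L_A(E)$ are all homogeneous: $vw=\delta_{v,w}v$, $s(e)e=e=e\,r(e)$, the analogous relations for $e^*$, $e^*f=\delta_{e,f}r(e)$, and the Cuntz--Krieger relations at regular vertices. The grading therefore descends from the free ring, so that $R=\oplus_{n\in\Z}R_n$ with $R_mR_n\subseteq R_{m+n}$. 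This is standard and is exactly what the word ``canonical'' refers to, so I will simply cite it rather than reprove it. The principal component of this grading is $R_0$.

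Second, apply Corollary~\ref{cor:AbelianTorFree} with $G:=\Z$, whose identity element is $0$. Every central idempotent of the $\Z$-graded ring $R$ then lies in $R_0$. The case $f=0$ is trivial, and the corollary covers it anyway.

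I do not expect any of these steps to be a real obstacle. The only point needing care is to confirm that the Leavitt path ring over an arbitrary unital coefficient ring $A$ and an arbitrary graph $E$, possibly non-row-finite and possibly with infinitely many vertices, is still $\Z$-graded in the required sense. Our results impose no unitality assumption, so it suffices to note that the homogeneity of the relations does not depend on the graph or on $A$.
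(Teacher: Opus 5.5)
Your proposal is correct and matches the paper's argument. The paper treats the proposition as an immediate consequence of Corollary~\ref{cor:AbelianTorFree}, applied to the canonical $\Z$-grading: $\Z$ is torsion-free abelian and the principal component is $R_0$, and your remarks on the homogeneity of the defining relations and on non-unitality are accurate.
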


\subsection{Fractional skew monoid rings}

For the definition of fractional skew monoid rings, we refer the reader to \cite{AraEtAl}.
The next proposition follows from Theorem~\ref{thm:Main1}.

\begin{prop}
Let $A$ be a unital ring, let $G$ be an abelian group, and let $S$ be a submonoid of $G$ such that $G=S^{-1}S$.
Furthermore, let $\Endr(A)$ be the monoid of not necessarily unital ring endomorphisms of $A$,
and let $\alpha : S \to \Endr(A)$ be a monoid homomorphism.
Consider the corresponding fractional skew monoid ring $R:=S^{\mathrm{op}} *_\alpha A *_\alpha S$ with its canonical $G$-grading. 
If $f$ is a nonzero central idempotent in $R$, then $f$ has finite support group.
Moreover, if $G$ is torsion-free, then every central idempotent lies in $R_e$.
\end{prop}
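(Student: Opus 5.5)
The plan is to reduce this to Theorem~\ref{thm:Main1} and Corollary~\ref{cor:AbelianTorFree}, using only the fact that the fractional skew monoid ring $R:=S^{\mathrm{op}} *_\alpha A *_\alpha S$ carries a genuine $G$-grading, $R=\oplus_{g\in G} R_g$ with $R_gR_h\subseteq R_{gh}$. That grading is the canonical one from \cite{AraEtAl}. As I recall, $R$ is spanned by monomials $t_{s}^{-}\, a\, t_{s'}^{+}$ with $s,s'\in S$ and $a\in A$. Such a monomial is placed in degree $s^{-1}s'\in G$, and $G=S^{-1}S$ guarantees that every $g\in G$ can occur as a degree.

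First I would confirm that this assignment gives a well-defined grading. This is the step I expect to need the most care, though it is recorded in \cite{AraEtAl}, and I would cite it rather than redo it. The defining relations of $R$ are homogeneous for this degree map: the relations $t_s^{+}a=\alpha_s(a)t_s^{+}$, $a t_s^{-}=t_s^{-}\alpha_s(a)$, $t_s^{-}t_s^{+}=1$ and $t_s^{+}t_s^{-}=\alpha_s(1)$ all have both sides of equal degree in the abelian group $G$. So the grading descends from the free ring to the quotient $R$, and multiplication of monomials adds degrees.

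Given the grading, the first assertion is immediate. $G$ is abelian and $R$ is a $G$-graded ring, so Theorem~\ref{thm:Main1} applies directly and gives that every nonzero central idempotent $f\in R$ has finite support group. No unitality or non-degeneracy hypotheses are needed, because Theorem~\ref{thm:Main1} handles possibly non-unital rings via Remark~\ref{rem:Dorroh}.

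For the second assertion, assume $G$ is torsion-free. The zero idempotent lies in $R_e$ trivially. If $f$ is a nonzero central idempotent, its support group is a finite subgroup of a torsion-free group, hence trivial, so $\Supp(f)\subseteq\{e\}$ and $f\in R_e$. Alternatively, this is exactly Corollary~\ref{cor:AbelianTorFree}. The only real content is therefore the identification of the canonical $G$-grading; everything else is a direct invocation.
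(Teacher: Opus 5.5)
Your proposal is correct and matches the paper, which simply states that the proposition follows from Theorem~\ref{thm:Main1} applied to the canonical $G$-grading on $R$ with $G$ abelian. The torsion-free part then follows exactly as you say, since a finite subgroup of a torsion-free group is trivial. Your check that the defining relations are homogeneous is extra detail: the paper takes the canonical grading from \cite{AraEtAl} as given.
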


\subsection{Partial skew group rings}

For the definition of partial skew group rings, we refer the reader to \cite[Sec.~11]{LLOW}.
The following proposition is an immediate consequence of Corollary~\ref{cor:AbelianTorFree}.

\begin{prop}
Let $G$ be a torsion-free abelian group and let $(\{\alpha_g\}_{g\in G}, \{D_g\}_{g\in G})$ be a partial action of $G$ on an $s$-unital ring $A$, with $D_g$ $s$-unital for every $g\in G$.
Every central idempotent in the partial skew group ring $A \star_\alpha G$ is contained in $A$.
\end{prop}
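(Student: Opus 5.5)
The proposition is a direct instance of Corollary~\ref{cor:AbelianTorFree}. The plan is to check that the partial skew group ring $R:=A \star_\alpha G$ is a $G$-graded ring in the sense used in this paper, with principal component $R_e$ identified with $A$.

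Recall the construction from \cite[Sec.~11]{LLOW}. The ring $R$ is the set of finite formal sums $\sum_{g\in G} a_g\delta_g$ with $a_g\in D_g$, and multiplication is given by
\[
(a_g\delta_g)(b_h\delta_h)=\alpha_g\bigl(\alpha_{g^{-1}}(a_g)b_h\bigr)\delta_{gh}.
\]
Under the standing hypotheses this product is associative, by the results cited there; the $s$-unitality of $A$ and of each $D_g$ is exactly the assumption ensuring this. Setting $R_g:=D_g\delta_g$, the product formula immediately gives $R_gR_h\subseteq R_{gh}$, and $R=\oplus_{g\in G}R_g$ by construction. Thus $R$ is $G$-graded. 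Since $D_e=A$ and $\alpha_e=\mathrm{id}_A$, the map $a\mapsto a\delta_e$ is a ring isomorphism $A\to R_e$, and it is via this map that $A$ is viewed inside $R$.

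Now let $f$ be a central idempotent in $R$. Since $G$ is torsion-free abelian, Corollary~\ref{cor:AbelianTorFree} applies directly and gives $f\in R_e$, that is, $f\in A$ under the identification above.

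The only point requiring any care is the well-definedness and associativity of $A\star_\alpha G$. This is not proved here but is taken from \cite{LLOW}, and it is the reason the $s$-unitality hypotheses appear. No unitality of $R$ is needed in the argument itself, because Theorem~\ref{thm:Main1}, and hence Corollary~\ref{cor:AbelianTorFree}, already handles non-unital graded rings via the Dorroh unitization of Remark~\ref{rem:Dorroh}.
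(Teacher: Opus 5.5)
Your proposal is correct and takes the same route as the paper. The paper also treats this as an immediate consequence of Corollary~\ref{cor:AbelianTorFree}, applied to the canonical $G$-grading $(A\star_\alpha G)_g=D_g\delta_g$, whose principal component $A\delta_e$ is identified with $A$.
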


The next proposition follows from Corollary~\ref{cor:NonDeg}, \cite[Prop.~11.1]{LLOW}, and \cite[Prop.~2.9]{LLOW}.

\begin{prop}
Let $G$ be a group and let $(\{\alpha_g\}_{g\in G}, \{D_g\}_{g\in G})$ be a partial action of $G$ on an $s$-unital prime ring $A$, with $D_g$ $s$-unital for every $g\in G$.
Every nonzero central idempotent in the partial skew group ring $A \star_\alpha G$ has finite support group.
\end{prop}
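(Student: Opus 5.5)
The plan is to deduce this from Corollary~\ref{cor:NonDeg}. We equip $R:=A \star_\alpha G$ with its canonical $G$-grading, given by $R_g := D_g\delta_g$ for $g\in G$. Under this grading the principal component is $R_e = D_e\delta_e \cong A$, because $D_e=A$ and $\alpha_e=\mathrm{id}_A$. Since $A$ is assumed to be prime, $R_e$ is a prime ring. So the remaining hypothesis of Corollary~\ref{cor:NonDeg} to check is that the grading is right (or left) non-degenerate. Once that is established, Corollary~\ref{cor:NonDeg} gives directly that every nonzero central idempotent of $R$ has finite support group.

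For non-degeneracy we need the following. Let $g\in G$ and let $a\delta_g$ be a nonzero element of $R_g$, so $0\neq a\in D_g$. We must find some $b\in D_{g^{-1}}$ with $(a\delta_g)(b\delta_{g^{-1}})\neq 0$. The product equals $\alpha_g(\alpha_{g^{-1}}(a)b)\delta_e$. Since $D_{g^{-1}}$ is $s$-unital and $\alpha_{g^{-1}}(a)\in D_{g^{-1}}$, there is some $b\in D_{g^{-1}}$ with $\alpha_{g^{-1}}(a)b=\alpha_{g^{-1}}(a)$. Then the product is $\alpha_g(\alpha_{g^{-1}}(a))\delta_e=a\delta_e$, which is nonzero. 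This computation is presumably the content of \cite[Prop.~2.9]{LLOW} together with \cite[Prop.~11.1]{LLOW}, which identify the partial skew group ring as an ($s$-unital, epsilon-strongly or at least non-degenerately) graded ring, and I would cite them for it. The left-hand version can be handled symmetrically, using a left $s$-unit of $a$ in $D_g$.

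The main obstacle is bookkeeping rather than ideas. First, the multiplication in $A\star_\alpha G$ is associative only thanks to $s$-unitality of the ideals $D_g$. Second, the exact formula for the product must be matched to the convention used in \cite[Sec.~11]{LLOW}. If my formula for the product is off, I would first try to obtain non-degeneracy abstractly from \cite[Prop.~2.9]{LLOW}, taking the statement that $R$ is $s$-unital and $R_gR_{g^{-1}}R_g=R_g$. Using this, $r_g\in r_gR_{g^{-1}}R_g$, so $r_gR_{g^{-1}}\neq\{0\}$ whenever $r_g\neq0$. That argument bypasses the explicit computation entirely.
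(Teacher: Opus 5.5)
Your proposal is correct and is essentially the paper's argument: the paper deduces the statement from Corollary~\ref{cor:NonDeg}, using that $R_e\cong A$ is prime and taking non-degeneracy of the canonical grading from \cite[Prop.~11.1]{LLOW} and \cite[Prop.~2.9]{LLOW}, while you verify that non-degeneracy directly via $(a\delta_g)(b\delta_{g^{-1}})=\alpha_g(\alpha_{g^{-1}}(a)b)\delta_e=a\delta_e$, with $b$ a right $s$-unit of $\alpha_{g^{-1}}(a)$ in $D_{g^{-1}}$. Only your fallback needs a fix, and your main argument does not depend on it: $s$-unitality of $R$ together with $R_gR_{g^{-1}}R_g=R_g$ need not yield $r_g\in r_gR_{g^{-1}}R_g$, but this does follow once $R_{g^{-1}}R_g$ is $s$-unital, which holds here since $R_{g^{-1}}R_g=D_{g^{-1}}\delta_e$.
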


\subsection{Algebraic Cuntz-Pimsner rings}

For the definition of algebraic Cuntz-Pimsner rings, we refer the reader to \cite{CarlsenOrtega}.
Recall that the Toeplitz ring $\mathcal{T}_{(P,Q,\psi)}$ 
and the relative Cuntz-Pimsner ring $\mathcal{O}_{(P,Q,\psi)}(J)$
carry natural $\Z$-gradings, and that the image of the coefficient ring is contained in degree zero.
The next proposition follows from Corollary~\ref{cor:AbelianTorFree}.

\begin{prop}
Let $R$ be a ring and let $(P,Q,\psi)$ be an $R$-system.
Then every central idempotent in the Toeplitz ring $\mathcal{T}_{(P,Q,\psi)}$ lies in $(\mathcal{T}_{(P,Q,\psi)})_0.$
If, moreover, $(P,Q,\psi)$ satisfies Condition (FS) and $J$ is a $\psi$-compatible ideal of $R$,
then every central idempotent in the relative Cuntz-Pimsner ring $\mathcal{O}_{(P,Q,\psi)}(J)$ lies in $(\mathcal{O}_{(P,Q,\psi)}(J))_0$.
\end{prop}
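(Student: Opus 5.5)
The statement concerns the Toeplitz ring $\mathcal{T}_{(P,Q,\psi)}$ and the relative Cuntz-Pimsner ring $\mathcal{O}_{(P,Q,\psi)}(J)$. The plan is to apply Corollary~\ref{cor:AbelianTorFree} with $G:=\Z$, which is torsion-free and abelian. That corollary needs no unitality, $s$-unitality or non-degeneracy hypothesis on the graded ring. So the only thing to check is that each of the two rings is a genuine $\Z$-graded ring in the sense of the introduction: a direct sum $\oplus_{n\in\Z}$ of additive subgroups with $R_nR_m\subseteq R_{n+m}$.

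For the Toeplitz ring, I recall from \cite{CarlsenOrtega} how $\mathcal{T}_{(P,Q,\psi)}$ is built. It is the universal ring generated by a covariant representation $(\pi,t,s)$ of the $R$-system. Assign degree $0$ to $\pi(R)$, degree $1$ to $t(Q)$ and degree $-1$ to $s(P)$. The defining relations are homogeneous: the module relations for $t$ and $s$ respect degrees, and the relation $s(p)t(q)=\pi(\psi(p\otimes q))$ has degree $0$ on both sides. Hence the universal ring carries a $\Z$-grading in which each $R_n$ is spanned by monomials of total degree $n$. This is the grading already referred to in the text before the proposition. Corollary~\ref{cor:AbelianTorFree} then gives directly that every central idempotent lies in the $0$-component.

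For the relative Cuntz-Pimsner ring, I would argue in the same way. Assume Condition (FS) and that $J$ is $\psi$-compatible. Then $\mathcal{O}_{(P,Q,\psi)}(J)$ is the quotient of $\mathcal{T}_{(P,Q,\psi)}$ by the ideal generated by elements $\pi(x)-\pi^{(1)}(\Delta(x))$ for $x\in J$. These generators lie in degree $0$, so the ideal is graded and the quotient inherits the $\Z$-grading. Alternatively, one can simply cite \cite{CarlsenOrtega} for the existence of this grading, since the text before the proposition already records it. Applying Corollary~\ref{cor:AbelianTorFree} once more places every central idempotent in the $0$-component.

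The only point needing care is the graded-ideal claim, that is, that the Cuntz-Pimsner relations are homogeneous of degree $0$; this is where Condition (FS) and $\psi$-compatibility enter, since they make the map $\Delta$ and the relations well defined. Because the paper treats both gradings as known, this step reduces to citation. No unit or non-degeneracy hypothesis is needed, because Corollary~\ref{cor:AbelianTorFree} rests on Theorem~\ref{thm:Main1}, which handles non-unital rings via the Dorroh unitization in Remark~\ref{rem:Dorroh}.
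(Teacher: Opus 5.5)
Your proposal is correct and follows the paper's route: the paper also notes that $\mathcal{T}_{(P,Q,\psi)}$ and $\mathcal{O}_{(P,Q,\psi)}(J)$ carry natural $\Z$-gradings (from Carlsen--Ortega) and applies Corollary~\ref{cor:AbelianTorFree} with $G=\Z$. Your sketch of why the gradings exist, via homogeneous defining relations and a quotient by an ideal generated by degree-$0$ elements, fills in a point the paper takes as known.
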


\end{document}